\pdfoutput=1
\documentclass[11pt]{amsart}

\usepackage{amsmath,amssymb,amsthm}
\usepackage[margin=1.2in]{geometry}
\usepackage{booktabs}
\usepackage{adjustbox}
\usepackage{xcolor}
\usepackage[colorlinks=true,linkcolor=blue,citecolor=blue,urlcolor=blue]{hyperref}
\hypersetup{pdfauthor={Bernd Johannes Wuebben},
  pdftitle={Non-smoothable Calabi-Yau threefolds from reflexive polytopes}}

\newtheorem{theorem}{Theorem}[section]
\newtheorem{proposition}[theorem]{Proposition}
\newtheorem{lemma}[theorem]{Lemma}
\newtheorem{corollary}[theorem]{Corollary}
\theoremstyle{definition}

\newtheorem{example}[theorem]{Example}
\newtheorem{question}[theorem]{Question}
\theoremstyle{remark}
\newtheorem{remark}[theorem]{Remark}

\newcommand{\ZZ}{\mathbb{Z}}
\newcommand{\RR}{\mathbb{R}}
\newcommand{\CC}{\mathbb{C}}
\newcommand{\PP}{\mathbb{P}}
\newcommand{\Def}{\operatorname{Def}}
\newcommand{\Sing}{\operatorname{Sing}}
\newcommand{\Spec}{\operatorname{Spec}}
\newcommand{\conv}{\operatorname{conv}}
\newcommand{\FF}{\mathbb{F}}

\title{Non-smoothable Calabi--Yau threefolds \\from reflexive polytopes}
\author{Bernd Johannes Wuebben}
\subjclass[2020]{14J32 (primary); 14B07, 14M25, 14J17, 52B20 (secondary)}
\keywords{Calabi--Yau threefolds, smoothing of singularities, toric
Gorenstein singularities, versal deformations, reflexive polytopes,
Minkowski decompositions}
\date{September 9, 2026}

\begin{document}

\begin{abstract}
We construct projective Calabi--Yau threefolds with isolated Gorenstein
canonical singularities that admit no smoothing, as anticanonical
hypersurfaces in Gorenstein Fano toric fourfolds. One example has a unique
singularity, the anticanonical cone over the first Hirzebruch surface.
Another has a unique singularity whose reduced miniversal base is a smooth
curve but whose deformations are all singular. The construction combines
Altmann's deformation theory with the smoothing criterion of
Corti--Filip--Petracci: a two-dimensional face of the ambient fan polytope with primitive edges
obstructs smoothing if it has no Minkowski decomposition into unit segments
and unimodular triangles. We also classify the 217 isolated Gorenstein
toric threefold singularities whose polygon edge vectors have coordinates
in $[-2,2]$ in some lattice basis, recording their deformation types and
smoothing components. Subject to completeness and absence of repetitions
in the database copy used, a scan of the Kreuzer--Skarke classification
finds this obstruction in $39{,}175{,}536$ of its $473{,}800{,}776$
polytopes (approximately $8.27\%$), whose generic anticanonical
hypersurfaces therefore admit no smoothing.
\end{abstract}

\maketitle
\enlargethispage{5pt}

\section{Introduction}\label{sec:intro}

Let $X$ be a projective threefold with trivial dualizing sheaf,
$h^1(\mathcal{O}_X)=0$, and Gorenstein canonical singularities; we call such
an $X$ a \emph{Calabi--Yau threefold} with canonical singularities.  A basic question,
going back to the foundational work of Friedman \cite{Friedman86}, is whether
$X$ can be \emph{smoothed}: does there exist a flat proper family over a
pointed disc with special fibre $X$ and smooth general fibre?

For the mildest singularities the question is completely understood.  If $X$
has ordinary double points, Friedman \cite{Friedman86} showed that a smoothing
exists if and only if the exceptional curves of a small resolution satisfy a
relation in homology with every coefficient nonzero; if $X$ is
$\mathbb{Q}$-factorial and has isolated terminal Gorenstein singularities,
Namikawa and Steenbrink \cite[Theorem~1.3]{NamikawaSteenbrink95} proved that
$X$ is smoothable.  Beyond the
terminal case the picture changes qualitatively.  Gross \cite{Gross97}
analyzed the case where $X$ arises from a primitive type~II contraction of a
smooth Calabi--Yau threefold (a divisor $E$ contracted to a point) and proved
that $X$ is then smoothable \emph{unless} $E \cong \PP^2$ or $E \cong \FF_1$
\cite[Theorem~5.8]{Gross97}.  The two exceptions are
non-smoothable: the germ at the singular point is the anticanonical cone over
$E$, which for $E=\PP^2$ is the rigid quotient singularity
$\frac13(1,1,1)$ \cite{Schlessinger71}, and for $E = \FF_1$ has
one-dimensional tangent space $T^1$ but no nontrivial deformations over a
reduced base
\cite[Example~2.19]{Gross97}.
Deformations of Calabi--Yau threefolds with canonical and log canonical
singularities remain an active subject; see Friedman--Laza
\cite{FriedmanLaza25,FriedmanLazaSigma} for recent progress.

We give explicit reflexive polytopes whose anticanonical hypersurfaces have
one non-smoothable singular point, including a germ with a positive-dimensional
reduced deformation base.  The inputs are:

\begin{enumerate}
\item Altmann's theorem \cite{Altmann97} that for an isolated Gorenstein toric
threefold singularity (the cone $C(Q)$ over a lattice polygon $Q$ with
primitive edges) the irreducible components of the reduced miniversal base
correspond to the maximal decompositions of $Q$ into Minkowski sums of lattice
polytopes;
\item the smoothing criterion of Corti--Filip--Petracci
\cite[\S5.1]{CFP}: the \emph{smoothing} components correspond to the Minkowski
decompositions of $Q$ into \emph{unit segments} and \emph{standard triangles};
consequently $C(Q)$ is smoothable if and only if such a decomposition exists;
\item Batyrev's construction \cite{Batyrev94}: for a reflexive polytope
$\Delta \subset \ZZ^4$ (throughout, the \emph{fan} polytope; see
\S\ref{sec:planting}), the generic anticanonical hypersurface $X$ in the
Gorenstein Fano toric fourfold $\PP_\Delta$ is a Calabi--Yau threefold with
canonical singularities, and (as we spell out in
Proposition~\ref{prop:planting}) when all edges of $\Delta$ have lattice
length one the singular points of $X$ have germs $C(F)$ for the
two-dimensional faces $F$ of $\Delta$ whose cones are singular; each such
face contributes as many points as the lattice length of its dual edge.
\end{enumerate}

Putting these together: \emph{if a reflexive $4$-polytope $\Delta$ has a
two-dimensional face $F$ with unit edges whose associated cone $C(F)$ is
non-smoothable, then the generic anticanonical hypersurface
$X \subset \PP_\Delta$ is a Calabi--Yau threefold that admits no smoothing}
(Corollary~\ref{cor:global}; by Remark~\ref{rem:longedges} only the edges
of $F$ itself need lattice length one).  The point
is that non-smoothable faces can be recognized by a finite test on their edge
vectors (Theorem~\ref{thm:trichotomy}) and realized by explicit completions
of embedded polygons (\S\ref{sec:examples}).

Our principal examples have a single singular point.  The first realizes
the anticanonical cone over $\FF_1$; the second realizes a non-smoothable
germ with nontrivial deformations over a reduced curve.

\begin{theorem}[Theorem~\ref{thm:C}]\label{thm:introA}
There is a $7$-vertex reflexive $4$-polytope $\Delta_C$ whose generic
anticanonical hypersurface $X_C\subset\PP_{\Delta_C}$ is a Calabi--Yau
threefold with a unique singular point, analytically isomorphic to the
anticanonical cone over the Hirzebruch surface $\FF_1$.  Every deformation of
$X_C$ over a reduced base preserves this singular germ; in particular
$X_C$ admits no smoothing.  Its maximal projective crepant partial resolution
is smooth with $(h^{1,1},h^{2,1})=(4,108)$.
\end{theorem}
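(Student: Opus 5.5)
The plan is to construct $\Delta_C$ explicitly and then verify its properties using the planting proposition (Proposition~\ref{prop:planting}), Corollary~\ref{cor:global}, and Gross's analysis of the cone over $\FF_1$.

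\textbf{Step 1: the face.} Start from the face. The anticanonical cone over $\FF_1$ is the toric Gorenstein singularity $C(F)$ in which $F$ is the fan polygon of $\FF_1$. This polygon is the quadrilateral
\[
F=\conv\{(1,0),(0,1),(-1,1),(0,-1)\},
\]
placed at height one. Its edges are primitive, so the cone is isolated. It has no Minkowski decomposition into unit segments and standard triangles, since its only decomposition is into a segment and a triangle that is not unimodular. This is the case recorded in \cite[Example~2.19]{Gross97}.

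\textbf{Step 2: completing to a reflexive polytope.} Embed $F$ as a two-dimensional face of a reflexive lattice $4$-polytope. Put the four vertices at $(1,0,1,0),(0,1,1,0),(-1,1,1,0),(0,-1,1,0)$, say, lying on a supporting hyperplane, and add three further vertices to close it up, for seven vertices in total. Choose the three extra vertices so that the following hold.
\begin{enumerate}
\item The convex hull is reflexive. Check this by computing the facet normals and confirming that each facet lies at height one.
\item Every edge of $\Delta_C$ has lattice length one.
\item Every two-dimensional face other than $F$ spans a smooth cone, meaning it is a unimodular triangle.
\item The edge of the dual polytope dual to $F$ has lattice length one.
\end{enumerate}
Proposition~\ref{prop:planting} then gives that the generic anticanonical hypersurface $X_C$ has exactly one singular point, with germ $C(F)$. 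This is the step I expect to be the main obstacle. Conditions (2)--(4) are strong, and I would find the three vertices by a small computer search, with a PALP-style reflexivity check on candidates near the opposite side of $F$. Once a candidate is found, the proof itself is a finite verification of face lattices, which I would simply tabulate.

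\textbf{Step 3: deformations.} Every deformation of $X_C$ over a reduced base induces a deformation of the germ. By \cite[Example~2.19]{Gross97}, or equivalently by Altmann's theorem, the reduced miniversal base of $C(F)$ is a point. The reason is that $F$ is Minkowski-indecomposable as a lattice polygon apart from the trivial decomposition, and the one-dimensional $T^1$ is obstructed. So the germ is locally trivial in any such family. Non-smoothability is then exactly Corollary~\ref{cor:global}.

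\textbf{Step 4: Hodge numbers.} Take a maximal projective crepant partial resolution by refining the normal fan, using all lattice points of $\Delta_C$ outside facet interiors. Inside $F$ the only new point is the interior point at height one, so the refinement subdivides $F$ into unimodular triangles. The resolution is therefore smooth, with the exceptional divisor being $\FF_1$ itself. Compute $(h^{1,1},h^{2,1})$ with Batyrev's formulas. In $h^{1,1}$, the correction term for $F$ is zero because its dual edge has length one. The $h^{2,1}$ count comes from the lattice points of the dual polytope. I would evaluate both by direct count to obtain $(4,108)$.
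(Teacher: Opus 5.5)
Your overall plan is the paper's: plant the $\FF_1$ polygon as a $2$-face, then apply Proposition~\ref{prop:planting}, Theorem~\ref{thm:trichotomy}, Lemma~\ref{lem:localglobal} and Batyrev's formulas. But the proposal does not prove the statement, because the statement is an existence claim and you never produce the witness. Step~2 lists the properties the three completing vertices must have and leaves finding them to a search. Nothing you say shows such vertices exist, and this is not automatic: by Lemma~\ref{lem:twovertices}, every completion of the form $\conv\bigl(Q\times\{(1,0)\}\cup\{e_4,w\}\bigr)$ has $\ell(F^\circ)\ge 2$, hence at least two singular points. So your condition (4) is the real content. The paper supplies $\Delta_C=\conv\bigl(Q_A\times\{(1,0)\}\cup\{e_4,(1,1,-1,0),(-2,-2,1,-1)\}\bigr)$ and lists its ten facet normals at level one. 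The two facets through $F_C$ have normals $(0,0,1,0)$ and $(0,0,1,1)$, whose difference is primitive, so $\ell(F_C^\circ)=1$. The vertex $(-2,-2,1,-1)$ lies in $\{x_3=1\}$, which is what lets the pencil $(0,0,1,t)$ of Lemma~\ref{lem:twovertices} reach $t_{\min}=0$. The unit edges and the twenty standard triangles are then a finite check. For the same reason your Step~4 cannot yield $(4,108)$. These numbers are invariants of the particular polytope (the paper uses $|\Delta_C\cap\ZZ^4|=9$ and $|\Delta_C^\circ\cap\ZZ^4|=141$), and ``evaluate both to obtain $(4,108)$'' assumes the answer. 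Two things in Step~4 are correct: the $F$-term in $h^{1,1}$ does vanish because $F^\circ$ has no interior lattice points, and the star subdivision of $F$ at its interior point is unimodular. (Minor: the refinement is of the face fan of $\Delta_C$, i.e.\ the normal fan of $\Delta_C^\circ$.)

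Separately, the justification in Step~1 is false and contradicts Step~3. Your $F$ has edge multiset $\{(-1,1),(-1,0),(1,-2),(1,1)\}$. No two or three of these vectors sum to zero, so by Lemma~\ref{lem:partition} $F$ is Minkowski-indecomposable; it is not a segment plus a non-unimodular triangle. If it were, Theorem~\ref{thm:trichotomy} would give a one-dimensional reduced component, as for $Q_B$. Then your claim that every deformation over a reduced base preserves the germ would fail. You seem to be thinking of the anticanonical moment polygon of $\FF_1$, a size-three triangle with a corner cut off, which is a size-two triangle plus a segment. That polygon has non-primitive edges and is not the one defining $C(F)$. The reason you give in Step~3 is the right one: $F$ is indecomposable, so the reduced miniversal base is a point and any classifying map from a reduced base is constant. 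This is exactly the final statement of Lemma~\ref{lem:localglobal}.
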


The local model here is precisely Gross's exceptional germ: the cone over
$\FF_1$ is \emph{reduced-rigid}.  Its miniversal base is a fat
point, with $\dim T^1 = 1$ but no positive-dimensional deformations
\cite[Example~2.19, \S5.5]{Gross97}.  Since it is the cone over a non-simplicial polygon, it
is not a quotient singularity, so $X_C$ is not covered by the classical
$\frac13(1,1,1)$ examples.  Compact Calabi--Yau threefolds containing such
points appear in the primitive-contraction literature: Gross constructs one
with two of them, as a double cover of the projective cone over the
anticanonically embedded $\FF_1$, branched over a general quartic section
\cite[Example~2.19]{Gross97}, while in \cite{KapustkaKapustka09,Kapustka09}
the smoothable exceptional types are systematically smoothed and the $\FF_1$
case is excluded.  There is also a single-point construction implicit in
\cite[Example~4.1]{Gross97}: specialize the elliptic fibration there to
$S=\FF_1$ and contract its section.  Adjunction gives the section normal
bundle $K_{\FF_1}$, and the analytic-neighborhood argument in the proof of
\cite[Proposition~5.4]{Gross97} identifies the contraction germ with the
anticanonical cone.  This is a specialization of Gross's construction,
rather than an example separately stated there.  Theorem~\ref{thm:C}
provides an explicit realization by a reflexive polytope.

For the second example let
\[
Q_B=\conv\{(0,0),(-2,-1),(-3,-2),(-2,-3),(-1,-2)\},
\]
and write $C(Q_B)$ for the affine toric threefold defined by the cone over
$Q_B$ at height one.  This pentagon is the Minkowski sum of the triangle
defining the quotient singularity $\frac13(1,1,1)$ and a unit segment.

\begin{theorem}[Theorem~\ref{thm:D}]\label{thm:introB}
There is a $10$-vertex reflexive $4$-polytope $\Delta_D$ whose generic
anticanonical hypersurface $X_D\subset\PP_{\Delta_D}$ is a Calabi--Yau
threefold with a unique singular point of analytic type $C(Q_B)$.
The reduced miniversal base of this germ is a smooth curve, and its
general deformation has one $\frac13(1,1,1)$ singular point.  The germ has
no smoothing component, so $X_D$ admits no smoothing.  Its maximal
projective crepant partial resolution is smooth with
$(h^{1,1},h^{2,1})=(8,118)$.
\end{theorem}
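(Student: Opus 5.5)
The proof has two halves: a local analysis of the germ $C(Q_B)$, and a global realization of that germ by a reflexive polytope. Corollary~\ref{cor:global} then ties them together.

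\emph{Local part.} First I check that $Q_B$ has primitive edges, so that $C(Q_B)$ is an isolated Gorenstein singularity. The edge vectors are
\[
(-2,-1),\quad (-1,-1),\quad (1,-1),\quad (1,1),\quad (1,2),
\]
and each is primitive. Next I determine all Minkowski decompositions of $Q_B$ into lattice polytopes. The edge directions are $\pm(1,1)$ (appearing twice) together with $(-2,-1)$, $(1,-1)$ and $(1,2)$. Any summand is a lattice polygon or segment whose edges form a sub-multiset of these directions summing to zero.
\begin{itemize}
\item The triangle $T$ with edges $(-2,-1),(1,-1),(1,2)$ is the $\tfrac13(1,1,1)$ triangle, and the two opposite $(1,1)$-edges form a unit segment $S$. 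So $Q_B = T + S$.
\item The edges $(-2,-1),(1,-1),(1,2)$ cannot be split further: no proper nonempty subset of them sums to zero, and $T$ is not a Minkowski sum.
\end{itemize}
Hence $T+S$ is the unique maximal decomposition. By Altmann \cite{Altmann97}, the reduced miniversal base has exactly one component, of dimension (number of summands) $-1 = 1$. I would then quote Altmann's explicit description of that component to see that it is smooth, i.e.\ a line in $T^1$, and that its general fibre is the toric degeneration-type variety obtained by separating $T$ from $S$. That fibre keeps the cone $C(T)=\tfrac13(1,1,1)$ and resolves the segment part, so it has exactly one $\tfrac13(1,1,1)$ point. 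Since $T$ is not a unit triangle, the Corti--Filip--Petracci criterion \cite[\S5.1]{CFP} shows there is no smoothing component.

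\emph{Global part.} I would write down the ten vertices of $\Delta_D$ explicitly, with $Q_B$ embedded as a 2-face, and verify the following, by computer where necessary:
\begin{itemize}
\item $\Delta_D$ is reflexive, i.e.\ its dual has integral vertices;
\item every edge of $\Delta_D$ has lattice length one, or at least the edges of $Q_B$ do (Remark~\ref{rem:longedges});
\item every other 2-face of $\Delta_D$ is a unimodular triangle, or more generally spans a smooth cone;
\item the edge of the dual polytope dual to $Q_B$ has lattice length one.
\end{itemize}
Proposition~\ref{prop:planting} then shows that $X_D$ has exactly one singular point, of type $C(Q_B)$. Corollary~\ref{cor:global} gives non-smoothability, because any smoothing of $X_D$ restricts to a smoothing of the germ.

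\emph{Hodge numbers.} A maximal projective crepant partial resolution comes from a regular fine triangulation of the boundary of $\Delta_D$. All 2-faces other than $Q_B$ are already unimodular, and $Q_B$ has a unimodular triangulation, so the resolution is smooth. The values $(8,118)$ follow from Batyrev's formulas by counting lattice points of $\Delta_D$ and of its dual, including interior points of faces.

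\emph{Main obstacle.} The hardest step is finding the completion $\Delta_D$: a reflexive polytope containing $Q_B$ as a face whose remaining singular faces all disappear and whose dual edge is short. I would start from the cone over $Q_B$ placed in a hyperplane at distance one from the origin, add vertices greedily, and test reflexivity and the face conditions by computer, searching until a 10-vertex candidate is found. The smoothness of the reduced base also needs care, and I would confirm it directly from Altmann's equations for the single component.
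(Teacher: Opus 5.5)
Your local analysis follows the paper (Example~\ref{ex:pentagon}, Theorem~\ref{thm:trichotomy}, Lemma~\ref{lem:cascade}), and your four verification bullets are exactly the checks the paper performs on $\Delta_D$. The genuine gap is that the theorem asserts the existence of a specific object, and your proposal never produces it. The properties ``$10$ vertices'', ``a unique singular point'' and ``$(h^{1,1},h^{2,1})=(8,118)$'' belong to one particular polytope. A plan to search for it does not prove it exists. Without an explicit vertex list, none of your bullets can be checked, and neither can the lattice-point counts entering Batyrev's formulas.

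The greedy completion of the embedded pentagon is also not a reliable route. Two adjoined vertices provably force $\ell(F^\circ)\ge2$ (Lemma~\ref{lem:twovertices}). The paper's completion searches reached $\ell(F^\circ)=1$ for type-\textup{(D)} faces only on polytopes with additional singular faces. In the whole Kreuzer--Skarke list (under the paper's database hypothesis) only four polytopes have a lone type-\textup{(D)} singular point, and just one of them has $10$ vertices. That is why the paper located $\Delta_D$ by scanning the database rather than by completion.

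The paper's proof consists of explicit data, each item checked in exact arithmetic:
\begin{itemize}
\item the ten vertices;
\item fifteen facet normals, each at level one, which gives reflexivity;
\item the face inventory: $30$ unit edges, $34$ standard triangles, and one pentagon $F_D$ on vertices $5,7,8,9,10$, with induced edge multiset $\{(1,0),(3,2),(-1,0),(-3,-1),(0,-1)\}$ carried onto $E(Q_B)$ by $\left(\begin{smallmatrix}-1&2\\-1&1\end{smallmatrix}\right)$;
\item the dual edge spanned by the negatives of $(0,1,0,1)$ and $(-1,-1,1,1)$, whose difference $(1,2,-1,0)$ is primitive, so $\ell(F_D^\circ)=1$;
\item the counts $|\Delta_D\cap\ZZ^4|=15$ and $|\Delta_D^\circ\cap\ZZ^4|=152$ for Batyrev's formulas.
\end{itemize}

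Two smaller corrections. First, your fallback ``or at least the edges of $Q_B$ do'' is not enough here. By Remark~\ref{rem:longedges}, a longer edge elsewhere makes $X_D$ singular along a curve of transverse $A_{m-1}$ points, so a unique singular point requires all edges to be unit. Second, the general-fibre claim should rest on the argument of Lemma~\ref{lem:cascade}. There, the Cayley-cone family for $T+s$ has general fibre with exactly one singular point, of type $C(T)$; the segment summand is smoothed, not ``resolved''. Since this fibre is not $C(Q_B)$, the classifying map is non-constant and therefore dominates the one-dimensional reduced base.
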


The local family in the second theorem is Altmann's family for the
triangle-plus-segment decomposition.  Its general fibre retains the quotient
cone, which is rigid (Lemma~\ref{lem:cascade}).  The reduced deformation
curve does not imply infinitesimal unobstructedness: this germ has
$\dim T^1=2$ and a nonreduced miniversal base.  Gross already constructed
non-smoothable compact Calabi--Yau threefolds with locally smoothable
singularities by contracting sections of elliptic fibrations over del Pezzo
surfaces \cite[Example~4.1]{Gross97}.  Here non-smoothability is intrinsic to
the singular germ, even though the germ admits nontrivial reduced
deformations; no obstruction to globalizing local smoothings is needed.

The proofs begin with simpler three-point examples $X_A$ and $X_B$
(Theorems~\ref{thm:A} and~\ref{thm:B}).  Each is obtained by adjoining two
vertices to a polygon embedded at height one.  Their prescribed faces have
dual edges of length $3$, hence contribute three singular points.
More generally, this two-vertex construction forces dual-edge length at
least $2$ (Lemma~\ref{lem:twovertices}).  A third adjoined vertex allows
length $1$ and gives $X_C$, as well as a threefold with a single
$\frac13(1,1,1)$ point.  A systematic scan of the Kreuzer--Skarke
classification \cite{KreuzerSkarke00} locates a $10$-vertex reflexive
polytope giving $X_D$; its printed vertex and facet data verify the example
independently of the database-completeness hypothesis used for the scan's
aggregate counts.

The two theorems are instances of a general construction: realize a
prescribed polygon as a two-dimensional face of a reflexive $4$-polytope.
We state this as Proposition~\ref{prop:planting} and
Corollary~\ref{cor:global}; it
gives a sufficient local obstruction to smoothing in terms of polygon
combinatorics.  To describe its local range
we record in \S\ref{sec:local} a complete census of the isolated Gorenstein
toric threefold singularities whose polygon has an edge representation with
coordinates in $[-2,2]$: there are $217$ classes up to equivalence,
comprising the ordinary double point, $8$ reduced-rigid classes,
$79$ deformable-but-non-smoothable
classes, and $129$ smoothable classes.  The five classes whose polygon has one
interior lattice point are exactly the anticanonical cones over the five
smooth toric del Pezzo surfaces, and the census reproduces the classical
deformation-theoretic data of these cones \cite[\S5.5]{Gross97} exactly
(Remark~\ref{rem:delpezzo}); this both validates the computation and places
the new examples in context: non-smoothability is common among Gorenstein
toric threefold singularities (87 of the 216 non-terminal canonical classes
in the census),
while the del Pezzo cone catalogue sees only its smallest stratum.

Our approach is the Calabi--Yau analogue, one dimension up, of the program of
Petracci and Corti--Hacking--Petracci on (non-)smoothability of Gorenstein
toric \emph{Fano} threefolds \cite{Petracci20,CHP24}, where the local models
are the same cones $C(F)$, attached to the facets of a reflexive
$3$-polytope.  In our setting the local-to-global step is simpler than in
loc.~cit.: because the relevant local models are non-smoothable outright, no
almost-flatness or bundle-cohomology obstruction is needed.  The versal base
of the germ already has no smoothing component, and this kills all global
smoothings (Lemma~\ref{lem:localglobal}).

All computations in this paper (the census, the face inventories of the
example polytopes, the dual-edge point counts, the completion-search
statistics and
the Kreuzer--Skarke scan of \S\ref{sec:questions}, and the Hodge numbers) are
implemented in six short exact-arithmetic computer programs, validated
against the classical cases recalled in the text (the quintic; $X_9 \subset
\PP(1,1,1,3,3)$; the del Pezzo cone catalogue; Altmann's hexagon; and, for
the scan, the Hodge numbers recorded in the Kreuzer--Skarke data itself).
They are included with the arXiv submission as ancillary files and are
public, together with all data and results, at
\url{https://github.com/bwuebben/calabi-yau-smoothability}.

\subsection*{Acknowledgements}
I am grateful to Mark Gross for first drawing my attention, many years ago, to the problem of smoothing canonical singularities on Calabi--Yau threefolds.
The author used LLMs during the development of this work for literature exploration, editorial and coding assistance and as an adversarial reader.

\section{Local models: cones over polygons}\label{sec:local}

\subsection{The dictionary}
Let $N \cong \ZZ^3$ and let $Q \subset N_\RR$ be a lattice polygon placed at
height one, i.e.\ $Q \subset \{ \langle u^*, \cdot\rangle = 1\}$ for a
primitive $u^* \in M = N^\vee$.  Write $\sigma_Q$ for the cone over $Q$ and
\[
C(Q) \;=\; \Spec \CC[\sigma_Q^\vee \cap M].
\]
Every Gorenstein toric affine threefold without torus factors arises this way,
and $C(Q)$ depends only on $Q$ up to affine unimodular equivalence of the
plane $\{ \langle u^*,\cdot\rangle = 1\}$, i.e.\ up to $\ZZ^2 \rtimes
GL_2(\ZZ)$ acting on $Q$.  We freely identify $Q$ with a polygon in $\ZZ^2$.
The basic dictionary (see e.g.\ \cite{CFP}) is:
\begin{itemize}
\item $C(Q)$ has an \emph{isolated} singularity if and only if every edge of
$Q$ has lattice length one (\emph{unit edges}); an edge of lattice length
$m\ge 2$ produces a curve of transverse $A_{m-1}$-singularities.
\item $C(Q)$ is canonical and Gorenstein; it is \emph{terminal} if and only if
$Q$ has no lattice points other than its vertices, and it is smooth if and
only if $Q$ is a standard triangle (unimodular image of
$\conv\{(0,0),(1,0),(0,1)\}$).
\item For a unit-edge $k$-gon, Pick's theorem gives $i = A - k/2 + 1$ interior
lattice points, where $A$ is the euclidean area.
\end{itemize}
Since translations do not change the edges, a unit-edge polygon is equivalent
data to its multiset $E(Q)=\{e_1,\dots,e_k\}$ of primitive edge vectors,
listed counterclockwise, with $\sum e_i = 0$; and $GL_2(\ZZ)$-equivalence of
polygons is $GL_2(\ZZ)$-equivalence of edge multisets.

\subsection{Deformations: the trichotomy}
Minkowski decompositions of unit-edge polygons are transparent at the level of
edge multisets.

\begin{lemma}\label{lem:partition}
Let $Q$ be a unit-edge polygon with edge multiset $E(Q)$.  Minkowski
decompositions $Q = R_1 + \dots + R_r$ into lattice polytopes (up to
translation of the summands) correspond bijectively to partitions
$E(Q)=E_1\sqcup\dots\sqcup E_r$ into non-empty zero-sum sub-multisets.  Under
this correspondence a summand is a unit segment if and only if its part is a
pair $\{v,-v\}$, and a standard triangle if and only if its part is a triple
$\{a,b,c\}$ with $a+b+c=0$ and $|\det(a,b)|=1$.
\end{lemma}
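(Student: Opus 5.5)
We prove Lemma~\ref{lem:partition} using support functions, or equivalently the normal fan. For a lattice polygon $R$ in $\RR^2$ and a primitive direction $u$, write $R^u$ for the face of $R$ on which $u$ attains its minimum. This face is either a vertex or an edge, and in the latter case it has some lattice length $\ell_u(R)$. The basic fact we rely on is that faces are additive under Minkowski sum: $(R_1+R_2)^u=R_1^u+R_2^u$. Consequently $\ell_u(R_1+R_2)=\ell_u(R_1)+\ell_u(R_2)$, where $\ell_u=0$ when the face is a vertex.

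We also attach to each lattice polytope $R$ (a point, segment or polygon) its counterclockwise edge multiset $E(R)$. Each edge of lattice length $\ell$ contributes its primitive direction $\ell$ times. For a segment $[p,p+mv]$ with $v$ primitive we set $E=\{v,\dots,v,-v,\dots,-v\}$, with each of $v$ and $-v$ appearing $m$ times. The additivity of $\ell_u$ then reads $E(R_1+R_2)=E(R_1)\sqcup E(R_2)$. Moreover $E(R)$ is a zero-sum multiset, and it determines $R$ up to translation: sorting the vectors by angle reconstructs the boundary. Conversely, every nonempty zero-sum multiset of nonzero vectors is $E(R)$ for a unique $R$ up to translation, obtained by concatenating the vectors sorted by angle. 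If all vectors are parallel, the resulting closed path degenerates to a segment traversed back and forth, which is consistent with the convention for segments.

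The bijection is now straightforward. Suppose $Q=R_1+\dots+R_r$. Then $E(Q)=\bigsqcup E(R_i)$, and each $E(R_i)$ is a zero-sum sub-multiset. It is nonempty provided we discard point summands, which are irrelevant up to translation. Conversely, a partition of $E(Q)$ into nonempty zero-sum parts $E_i$ yields polytopes $R_i$ with $E(R_i)=E_i$. The sum $\sum R_i$ then has edge multiset $E(Q)$, so it equals $Q$ up to translation, and the translation can be absorbed into $R_1$. Uniqueness of $R$ from $E(R)$ shows that the two constructions are mutually inverse.

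One subtlety needs care. Since every edge of $Q$ is unit, $E(Q)$ contains each primitive direction with multiplicity equal to the number of edges of $Q$ in that direction. For a convex polygon this is at most $1$, so the parts $E_i$ genuinely partition the edges of $Q$ and no summand has an edge of length greater than one. In general, though, the partition is one of a multiset, and the argument above handles multiplicities correctly.

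For the last sentence of the lemma, a unit segment $[0,v]$ has $E=\{v,-v\}$, and the converse follows from the same reconstruction. A two-element zero-sum part must have the form $\{v,-v\}$, and it reconstructs the unit segment. A three-element zero-sum part $\{a,b,c\}$ consisting of non-parallel primitive vectors reconstructs the triangle with edge vectors $a,b,c$. Its normalized area is $|\det(a,b)|$, and a lattice triangle is unimodular exactly when this determinant is $1$. A degenerate triple cannot occur: three primitive parallel vectors cannot sum to zero.

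The main obstacle is to state the support-function additivity cleanly and to handle degenerate (segment) summands within the same convention. Once the formula $E(R_1+R_2)=E(R_1)\sqcup E(R_2)$ is established, all remaining steps are formal.
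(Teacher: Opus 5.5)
Your proof is correct and follows essentially the same route as the paper: faces with a common normal add under Minkowski sum, so lattice lengths add; primitivity of the edges of $Q$ then forces each edge into a single summand; a zero-sum multiset of primitive vectors sorted by angle reconstructs the summand up to translation; and the determinant criterion identifies the unimodular triangles. Your version differs only in making the multiplicity bookkeeping explicit, for general summands and for degenerate segment summands, where the paper leaves this implicit.
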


\begin{proof}
If $Q = R + S$, then each edge of $Q$ decomposes as the sum of the faces of
$R$ and $S$ with the same outer normal, and lattice lengths add; since the
edges of $Q$ are primitive, each edge comes entirely from $R$ or from $S$.
Hence $E(Q) = E(R) \sqcup E(S)$, and inductively any decomposition partitions
$E(Q)$ into zero-sum parts.  Conversely, a zero-sum sub-multiset of primitive
vectors, sorted counterclockwise, closes up to a convex lattice polygon
(a segment when the part is $\{v,-v\}$), and the Minkowski sum of the polygons
associated to the parts of a partition has edge multiset $E(Q)$; since a
convex polygon is determined by its edge multiset up to translation, that sum
is $Q$.  The last statement is immediate: a triangle with primitive edges
$a,b,c=-a-b$ is unimodular if and only if $|\det(a,b)|=1$.
\end{proof}

The deformation theory of $C(Q)$ is completely described by two results.

\begin{theorem}[Altmann \cite{Altmann97}; Corti--Filip--Petracci
{\cite[\S5.1]{CFP}}]\label{thm:trichotomy}
Let $Q$ be a unit-edge lattice polygon, not a standard triangle, and let
$V = C(Q)$.
\begin{enumerate}
\item The irreducible components of the reduced miniversal base space of $V$
are in bijection with the maximal Minkowski decompositions of $Q$ into lattice
summands; each component is smooth, and the component associated to a
decomposition with $r$ summands has dimension $r-1$.
\item The smoothing components are exactly those associated to decompositions
all of whose summands are unit segments and standard triangles.
\end{enumerate}
Consequently, with the notation of Lemma~\textup{\ref{lem:partition}} (this
repackaging, and the labels below, are ours), exactly
one of the following holds:
\begin{itemize}
\item[\textup{(R)}] \textup{(reduced-rigid)} $E(Q)$ has no proper non-empty zero-sum
sub-multiset.  Then the reduced miniversal base is a point: $V$ has no
positive-dimensional deformations, and is not smoothable.
\item[\textup{(D)}] \textup{(deformable, non-smoothable)} $E(Q)$ has a proper
zero-sum sub-multiset, but admits no partition into pairs $\{v,-v\}$ and
unimodular triples.  Then $V$ deforms non-trivially but no deformation has
smooth general fibre.
\item[\textup{(S)}] \textup{(smoothable)} $E(Q)$ admits such a partition; the
number of such partitions equals the number of smoothing components of the
miniversal base.
\end{itemize}
\end{theorem}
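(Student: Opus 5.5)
Parts (1) and (2) are quoted from the literature: (1) is Altmann's theorem \cite{Altmann97} on the reduced miniversal base of an isolated Gorenstein toric threefold singularity, and (2) is the smoothing-component criterion of \cite[\S5.1]{CFP}. We do not reprove them. The work is to derive the trichotomy from them using Lemma~\ref{lem:partition}, which turns Minkowski decompositions of $Q$ into partitions of $E(Q)$ into zero-sum parts, with unit segments matching pairs $\{v,-v\}$ and standard triangles matching unimodular triples.

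First, the three cases are mutually exclusive and exhaustive. (S) requires a partition into pairs and unimodular triples. Since $Q$ is not a standard triangle, such a partition has at least two parts. Indeed, a single part would make $Q$ itself a segment (impossible for a polygon) or a standard triangle (excluded). So in case (S) some part is a proper non-empty zero-sum sub-multiset. Hence (R), (D), (S) are decided by: no proper zero-sum sub-multiset; one exists but no pair/triple partition exists; a pair/triple partition exists. This is a disjoint exhaustive split.

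In case (R), Lemma~\ref{lem:partition} shows $Q$ has only the trivial decomposition $Q=Q$. This is then the unique maximal decomposition, with $r=1$. By (1) the reduced base has a single component of dimension $r-1=0$, i.e.\ a point. So every deformation over a reduced base is trivial. Nonsmoothability follows either because $V$ itself is singular (as $Q$ is not a standard triangle), or from (2), since $Q$ is not a sum of segments and triangles. In case (D), a proper zero-sum part $E_1$ gives the partition $E_1\sqcup(E(Q)\setminus E_1)$, a decomposition with $r\ge 2$. Refining it to a maximal one yields a component of dimension $\ge 1$, so $V$ deforms nontrivially. No component is a smoothing component by (2), since no decomposition into unit segments and standard triangles exists. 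Any deformation with smooth general fibre over a reduced base would factor through some component, and its general fibre would then be smooth. That component would therefore be a smoothing component, a contradiction.

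In case (S), the one point needing care, and the main obstacle, is that (2) counts smoothing components as maximal decompositions all of whose summands are segments and standard triangles. We must check that every decomposition into pairs and unimodular triples is already maximal, so that the count of such partitions equals the count of components. A unit segment is clearly Minkowski-indecomposable, because a pair $\{v,-v\}$ has no proper zero-sum sub-multiset. For a unimodular triple $\{a,b,c\}$, the only candidate proper zero-sum sub-multisets are single vectors or pairs. A single primitive vector is nonzero, and a pair $\{a,b\}$ sums to zero only if $b=-a$, which contradicts $|\det(a,b)|=1$. So each summand is indecomposable and the decomposition is maximal. Distinct partitions give distinct decompositions, hence distinct components by the bijection in (1). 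Therefore the number of such partitions equals the number of smoothing components.
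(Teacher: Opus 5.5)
Your proposal is correct and takes essentially the paper's approach. The paper also cites (1) and (2) from Altmann and Corti--Filip--Petracci and obtains the trichotomy by repackaging them through Lemma~\ref{lem:partition}, using the same key observation: unit segments and standard triangles are Minkowski-indecomposable, so decompositions into them are automatically maximal and the partition count equals the number of smoothing components. You go further only in spelling out explicitly the exclusivity of the three cases and the consequences in cases (R) and (D), which the paper leaves implicit.
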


Thus \emph{reduced-rigid} means that the reduced miniversal base is a point;
it does not assert $T^1=0$.  For example, the
$\FF_1$ cone below has a non-zero tangent space but a non-reduced,
point-supported miniversal base.

For the dimensions in (1) and the concentration of $T^1$ in the Gorenstein
degree see also \cite{Altmann95}; for unit-edge $k$-gons one has
$\dim T^1 = k-3$.  (In the count of smoothing components, unit segments
and standard triangles are Minkowski-indecomposable, so decompositions
into them are automatically \emph{maximal} and index components in
Altmann's parametrization.)  For a two-variable Laurent polynomial $f$, Filip
\cite{Filip25} constructs a formal deformation of the associated, possibly
non-isolated, Gorenstein toric pair and proves that the general fibre of that
family is smooth if and only if $f$ is $0$-mutable.  This gives non-isolated
smoothing families and evidence for the full smoothing-component
correspondence conjectured in \cite{CFP}; it does not classify all components
of every non-isolated germ.

\begin{example}\label{ex:pentagon}
Small examples of the three classes are as follows.
\begin{itemize}
\item The $\frac13(1,1,1)$ singularity $=\CC^3/\mu_3$ is the cone over the
triangle $T$ with $E(T)=\{(-2,-1),(1,-1),(1,2)\}$; no proper subset sums to
zero, so it is rigid, recovering Schlessinger \cite{Schlessinger71}.
\item The ordinary double point is the cone over the unit square,
$E=\{\pm(1,0),\pm(0,1)\}$: one partition into two segment pairs, hence
smoothable with a single smoothing component ($xy-zw=t$).
\item The pentagon $Q_B = T + s$, the Minkowski sum of $T$ with a unit segment
$s$, has $E(Q_B)=\{(-2,-1),(-1,-1),(1,-1),(1,1),(1,2)\}$.  Its only proper
zero-sum sub-multisets are the pair $\{(-1,-1),(1,1)\}$ and the complementary
triple $E(T)$, which is not unimodular ($\det = 3$).  So $Q_B$ has exactly one
non-trivial maximal decomposition, $T+s$: the reduced miniversal base of
$C(Q_B)$ is a smooth curve, and there is no smoothing component; $C(Q_B)$
is deformable but not smoothable.  (Here $\dim T^1 = 2$, so the miniversal
base is in addition non-reduced.)
\end{itemize}
\end{example}

The next lemma describes the general fibre of the homogeneous deformation
attached to a Minkowski decomposition; we use it in Remark~\ref{rem:stuck}
to identify what the local deformations of our second example do to the
germ.

\begin{lemma}[General fibre of the Altmann family]\label{lem:cascade}
Let $Q = R_0 + \dots + R_r$ be a Minkowski decomposition of a unit-edge
polygon into lattice summands, let $f_0,\dots,f_r$ be the standard
basis of $\ZZ^{r+1}$, and let $\widetilde V$ be the Gorenstein toric
$(3+r)$-fold associated to the \emph{Cayley cone}
$\widetilde\sigma=\operatorname{Cone}\bigl(\bigcup_{i} R_i\times\{f_i\}\bigr)
\subset \RR^2\oplus\RR^{r+1}$; write $u_0,\dots,u_r$ for the dual basis of
$f_0,\dots,f_r$, viewed as characters of $\widetilde V$, and let
\[
\pi=\bigl(\chi^{u_0}-\chi^{u_1},\ \dots,\ \chi^{u_0}-\chi^{u_r}\bigr)\colon
\widetilde V \longrightarrow \CC^r,
\qquad \pi^{-1}(0)\cong C(Q),
\]
be Altmann's homogeneous deformation attached to the decomposition
\cite{Altmann95} \textup{(}cf.\ \cite[Remark~6.2]{CFP}\textup{)}.  Then:
\begin{enumerate}
\item the general fibre of $\pi$ has exactly one singular point of analytic
type $C(R_i)$ for each two-dimensional non-smooth summand $R_i$, and no
other singular points;
\item for the pentagon $Q_B = T + s$ of
Example~\textup{\ref{ex:pentagon}}, the family $\pi$ dominates the
positive-dimensional component of $\Def C(Q_B)$, whose general fibre
therefore has a single singular point, of type $\frac13(1,1,1)$.
\end{enumerate}
\end{lemma}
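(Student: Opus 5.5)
The plan is to compute the general fibre of $\pi$ torically, using the Cayley cone. Fix a general point $t=(t_1,\dots,t_r)\in\CC^r$ with all $t_j\neq 0$, pairwise distinct and nonzero differences. Then $\pi^{-1}(t)$ is the locus in $\widetilde V$ where $\chi^{u_0}-\chi^{u_j}=t_j$. The toric variety $\widetilde V$ is stratified by torus orbits indexed by faces $\tau$ of $\widetilde\sigma$. On the orbit $O(\tau)$ the character $\chi^{u_i}$ is nonzero exactly when $u_i$ vanishes on $\tau$, i.e.\ when $\tau$ contains no generator from $R_i\times\{f_i\}$. A face of the Cayley cone is itself a Cayley cone $\operatorname{Cone}\bigl(\bigcup_i F_i\times\{f_i\}\bigr)$, where $F_i$ is a face of $R_i$ (possibly empty) and the nonempty $F_i$ share a common outer normal in $\RR^2$.

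First step: determine which orbits meet the general fibre. If two indices $i\neq j$ have $\chi^{u_i}=\chi^{u_j}=0$ on $O(\tau)$, then the equations force $t_i=t_j$ (or $t_j=0$ when one index is $0$), which fails for general $t$. So the general fibre only meets orbits $O(\tau)$ for which at most one $F_i$ is nonempty. These are the open torus, where the fibre is smooth, and the faces $\tau=\operatorname{Cone}(F_i\times\{f_i\})$ with $F_i$ a face of a single summand $R_i$.

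Second step: compute the transversal singularity along such an orbit. The germ of $\widetilde V$ along $O(\tau)$ is $U_\tau\times(\CC^*)^{\,\mathrm{codim}}$, where $U_\tau$ is the affine toric variety of $\tau$ in its own lattice span. On $O(\tau)$ the functions $\chi^{u_k}$ for $k\neq i$ are units. Hence the $r$ equations of $\pi$ cut transversally, and they eliminate $r$ of the torus directions: one solves for the $\chi^{u_k}$ with $k\ne i$, and the value of $\chi^{u_i}$ is then forced. This last point I will check via the differentials $d\chi^{u_k}$, which are independent on the torus factor. Therefore the fibre is locally $U_\tau\times(\text{smooth})$. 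If $F_i$ is a vertex or a unit edge, $U_\tau$ is smooth, because the edges of $Q$ are primitive and so the edges of $R_i$ are too. If $F_i=R_i$ is two-dimensional, then $U_\tau\cong C(R_i)$, which is singular exactly when $R_i$ is not standard. A dimension count is also needed: $O(\tau)$ has dimension $(3+r)-3$, so after imposing $r$ equations it meets the fibre in finitely many points. I must show it is exactly one point, since the equations on that orbit are of the form $\chi^{u_k}=\text{const}$ with $u_k$ forming part of a basis of the orbit's character lattice. I expect this to be the main obstacle: verifying the lattice-basis claim, i.e.\ that $\{u_k\}_{k\ne i}$ restricted to $\tau^\perp$ is saturated. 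It follows from the Cayley structure, since $\tau^\perp\supset$ the span of the $u_k$ with $k\neq i$, and these are dual to coordinates $f_k$ not appearing in $\tau$. This gives part (1).

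For part (2), apply (1) to $Q_B=T+s$. The segment contributes nothing and $T$ contributes one $\frac13(1,1,1)$ point. By Example~\ref{ex:pentagon}, the reduced base of $\Def C(Q_B)$ is a single smooth curve, which by Theorem~\ref{thm:trichotomy}(1) is the component attached to $T+s$. Altmann's construction \cite{Altmann95} shows that the one-parameter family $\pi$ is induced from that component with nonzero Kodaira--Spencer map, so it is not trivial. Because the component is one-dimensional and smooth, $\pi$ then dominates it, and the general fibre over the component is the general fibre of $\pi$.
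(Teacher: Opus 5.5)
Your proposal is correct and follows essentially the same route as the paper: stratify $\widetilde V$ by torus orbits, use genericity of $t$ to exclude orbits of faces that meet two summand heights, and near each single-summand orbit use a product chart $U_\tau \times (\text{torus})$ in which the $\chi^{u_k}$, $k\neq i$, serve as torus coordinates; an implicit-function argument then applies, and these characters form a basis of $\tau^\perp\cap M$ (by containment, the rank count, and saturation of the coordinate sublattice), which gives exactly one point over each two-dimensional summand. The only real difference is in (2): you get non-triviality of $\pi$ from a nonzero Kodaira--Spencer map quoted from Altmann, whereas the paper derives it directly from (1), since the general fibre's $\frac13(1,1,1)$ point is not isomorphic to $C(Q_B)$, which avoids the external citation.
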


\begin{proof}
(1)  For two-dimensional $R_i$ the
orbit closure of the face $\operatorname{Cone}(R_i\times\{f_i\})$ is an
$r$-dimensional affine toric variety whose character lattice is freely
generated by the restrictions of the $u_j$, $j \neq i$, while
$\chi^{u_i}$ vanishes on it; so for generic $t\in\CC^r$ the $r$ equations of
the fibre $\pi^{-1}(t)$ determine on it the single reduced point
$\chi^{u_0}=t_i$, $\chi^{u_j}=t_i-t_j$ (with the convention $t_0 = 0$,
which makes the formula uniform in $i$), lying in the open orbit, and the
implicit-function argument in the proof of
Proposition~\ref{prop:planting}(2) below identifies the germ of
$\pi^{-1}(t)$ there with $(C(R_i),0)$, on the chart cut out by the
characters $\chi^{u_j}$, $j \neq i$, which form a basis of the
saturated annihilator of $\operatorname{Cone}(R_i\times\{f_i\})$ and
exhibit it as $C(R_i)\times(\CC^\ast)^r$.  On the orbit closure of a face of
$\widetilde\sigma$ meeting two distinct summand heights, some equation of
$\pi^{-1}(t)$ restricts to $0=t_i$ or $t_i=t_j$, so the general fibre avoids
these strata.  The remaining strata are the open orbit and the orbits of
the proper faces $\tau$ of the cones
$\operatorname{Cone}(R_i\times\{f_i\})$ (together with, for a
\emph{segment} summand $R_i$, the two-dimensional cone
$\operatorname{Cone}(R_i\times\{f_i\})$ itself, which the same
computation below covers), and there the fibre is smooth.
Over the open orbit this holds because $u_0,\dots,u_r$ extend to a basis of
the character lattice.  Along the orbit of a proper face $\tau$ the ambient
$\widetilde V$ is itself smooth: $\tau$ is spanned either by a single ray
$(v,f_i)$ with $v$ a vertex of $R_i$, which is primitive, or by the rays
over an edge $[v,v']$ of $R_i$, and since the edge is unit the pair
$(v,f_i),\ (v'-v,0)$ extends to a lattice basis.  The functionals $u_j$,
$j \neq i$ (including $u_0$ when $i \neq 0$), vanish on $\tau$ and form part of a basis of
the saturated lattice $\tau^\perp \cap M$, so on the smooth chart
$U_\tau \cong \CC^{\dim\tau}\times(\CC^\ast)^{3+r-\dim\tau}$ the characters
$\chi^{u_j}$, $j\neq i$, may be taken as coordinates of
the torus factor, while $\chi^{u_i}$ is divisible by a nonconstant monomial
in the $\CC^{\dim\tau}$-coordinates and hence vanishes along the orbit
together with its derivatives in the torus coordinates.  The Jacobian of
the $r$ equations of $\pi^{-1}(t)$ with respect to
these torus coordinates is therefore triangular with diagonal entries
$\pm 1$, and the fibre is a
smooth complete intersection near the orbit.

(2)  By Example~\ref{ex:pentagon} the reduced miniversal base of $C(Q_B)$
is a smooth curve and $T+s$ is the unique non-trivial maximal
decomposition, with $r=1$.  By (1) the general fibre of $\pi$ has a single
singular point of type $C(T) = \frac13(1,1,1)$, not $C(Q_B)$; the family is
therefore non-trivial, and its classifying map to the
miniversal base is non-constant and dominates the curve.  Hence the
general fibre over the positive-dimensional component of $\Def C(Q_B)$ is
the general fibre of $\pi$: a single $\frac13(1,1,1)$ point.  The only
available deformation of the germ thus trades the pentagon cone for the
rigid quotient cone, after which no further deformation is possible.
\end{proof}

\paragraph{Maximal decompositions.}
In a maximal decomposition $Q=\sum_i R_i$, each summand is
Minkowski-indecomposable.  Consequently every two-dimensional non-smooth
summand is of type \textup{(R)}.  Combining
Theorem~\ref{thm:trichotomy} with Lemma~\ref{lem:cascade}, the general fibre
on the miniversal component indexed by this decomposition has precisely one
type-\textup{(R)} point $C(R_i)$ for each such summand and is otherwise
smooth.  Thus the general deformation on each positive-dimensional reduced
component of a type-\textup{(D)} cone has only reduced-rigid singularities.

\subsection{A census, and the del Pezzo catalogue}\label{subsec:census}
Both tests in Theorem~\ref{thm:trichotomy} are finite, so the trichotomy can
be tabulated.  We enumerated all unit-edge polygons whose (counterclockwise)
edge vectors have coordinates in $[-2,2]$ and reduced modulo $GL_2(\ZZ)$.
The possible numbers of edges are $3$ through $14$, and $16$.
Indeed, the box contains $16$ primitive vectors, whose sum is zero;
omitting one leaves a nonzero sum, so no $15$-gon occurs.

\begin{proposition}\label{prop:census}
Up to equivalence there are exactly $217$ unit-edge polygons whose
\textup(counterclockwise\textup) primitive edge vectors can be chosen with
all coordinates in $[-2,2]$, excluding the standard triangle.  They
comprise: the unit square (the ordinary double point, the unique terminal
class, smoothable); $8$ reduced-rigid classes; $79$ deformable-but-non-smoothable
classes; and $129$ smoothable classes.  The $8$ reduced-rigid classes are, listed as
$[k,i;\,E(Q)]$:
\[
\begin{array}{l}
[3,1;\,(-2,-1),(1,-1),(1,2)]\\[2pt]
[4,1;\,(-2,-1),(0,-1),(1,0),(1,2)]\\[2pt]
[4,2;\,(-2,-1),(2,-1),(1,2),(-1,0)]\\[2pt]
[4,2;\,(-2,-1),(2,-1),(1,1),(-1,1)]\\[2pt]
[5,2;\,(-2,-1),(2,-1),(1,1),(0,1),(-1,0)]\\[2pt]
[5,3;\,(-2,-1),(-1,-1),(2,-1),(1,2),(0,1)]\\[2pt]
[5,4;\,(-2,-1),(0,-1),(2,-1),(1,2),(-1,1)]\\[2pt]
[6,5;\,(-2,-1),(-1,-1),(2,-1),(1,0),(1,2),(-1,1)]
\end{array}
\]
\end{proposition}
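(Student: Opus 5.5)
The census is a finite computation, so the proof is an exhaustive enumeration followed by a classification of each class using Lemma~\ref{lem:partition} and Theorem~\ref{thm:trichotomy}.

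\textbf{Step 1: enumerate the polygons.} Let $P$ be the set of $16$ primitive vectors in $[-2,2]^2$. By the edge-multiset description of \S\ref{sec:local}, a unit-edge polygon is the same as a finite multiset of primitive vectors with zero sum. Convexity forces the counterclockwise edge directions to be strictly increasing in angle, so no direction repeats. Hence the polygons with edge vectors in the box correspond exactly to the nonempty zero-sum subsets of $P$, with at least three elements and not contained in a line. This is a search over $2^{16}$ subsets, which is trivial to run exhaustively. The same count also yields the list of possible $k$ (no $15$-gons).

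\textbf{Step 2: reduce modulo $GL_2(\ZZ)$.} The phrase ``can be chosen'' in the statement means a class is counted if some representative lies in the box. So the equivalence classes are the orbits of $GL_2(\ZZ)$ that meet the subsets found in Step~1. Two subsets are then identified when some $g\in GL_2(\ZZ)$ maps one onto the other.

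A canonical form makes this identification effective. For each subset and each ordered pair of adjacent edges $(e_j,e_{j+1})$, together with orientation $\pm$, let $g$ be the matrix sending $e_j$ to $(1,0)$ and $e_{j+1}$ to a vector normalized into a fixed fundamental domain. Since $\det(e_j,e_{j+1})$ need not be $1$, this normalization must still be done carefully. Then take the lexicographically minimal image sequence. Two subsets give the same class exactly when their canonical forms agree; this is correct because $g$ is determined, up to the finite ambiguity enumerated, by where it sends an edge pair.

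Finally remove the standard triangle. The invariants $(k,i)$, with $i$ computed by Pick's formula, serve as a checksum and give the labels in the statement.

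\textbf{Step 3: classify each class.} For each representative, enumerate its proper nonempty zero-sum sub-multisets. If there are none, the class is (R). Otherwise, search recursively for a partition into pairs $\{v,-v\}$ and triples with $a+b+c=0$ and $|\det(a,b)|=1$, as in Lemma~\ref{lem:partition}. If such a partition exists the class is (S), otherwise (D). The class type is a $GL_2(\ZZ)$-invariant, so one representative suffices.

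The terminal case is handled by the criterion that $C(Q)$ is terminal exactly when $Q$ has no lattice points besides its vertices. In Pick terms this means $i=0$ and $A=k/2-1$. Among unit-edge polygons this leaves only the unit square, since a unit-edge lattice polygon with no interior and no extra boundary points is a standard triangle or a unit parallelogram. Finally, read off the $8$ type (R) lists and check them against the displayed table, with counts $1+8+79+129=217$.

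\textbf{Main obstacle.} The hard part is the correctness of the orbit reduction in Step~2. Two things must be ensured: no two listed classes are secretly equivalent, and every class with some box representative is detected. The first is guaranteed by the canonical form. The second holds because the search runs over all box subsets rather than only normalized ones.

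To cross-validate, I would check that the classes with $i=1$ are exactly the $5$ polygons giving the smooth toric del Pezzo cones. Their types must match Gross's data: $\PP^2$ and $\FF_1$ are (R), the rest (S). I would also confirm that $Q_B$ appears as type (D).
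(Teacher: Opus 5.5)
Your proposal is correct. Steps 1 and 3 coincide with the paper's proof: enumerate the zero-sum spanning subsets of the $16$ primitive box vectors, then apply the partition test of Lemma~\ref{lem:partition} and Theorem~\ref{thm:trichotomy} to each class.

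The real difference is in the orbit reduction. The paper builds no normal form. It observes that if $g\in GL_2(\ZZ)$ carries one census multiset onto another, then $g=[\,u'\,|\,v'\,]\operatorname{adj}[\,u\,|\,v\,]/\det[\,u\,|\,v\,]$ for two independent edges $u,v$ and their images. Hence every entry of $g$ has absolute value at most $8$, and the paper reduces the $1{,}471$ multisets under this finite set of matrices.

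Your canonical form replaces that a priori bound by a complete invariant. The stabilizer of $(1,0)$ in $GL_2(\ZZ)$ is $\{\left(\begin{smallmatrix}1&b\\0&\pm1\end{smallmatrix}\right): b\in\ZZ\}$. So for each ordered pair $(x,y)$ of consecutive edges, in either traversal direction, there is a unique $g$ with $gx=(1,0)$ and $gy=(a,d)$, where $d=|\det(x,y)|\ge 1$ and $0\le a<d$. This uniqueness makes the normalizer of $(hx,hy)$ in $hQ$ equal to $gh^{-1}$. Therefore the set of $2k$ candidate sequences is a $GL_2(\ZZ)$-invariant, and its lexicographic minimum is a complete one. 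You should state this invariance explicitly: ``$g$ is determined by where it sends an edge pair'' is the reason, but not yet the conclusion.

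The paper's route is a one-line justification tied to the box. Yours is independent of the box, turns class identification into a lookup rather than a pairwise search, and would scale to larger censuses.

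Your derivation of the unique terminal class from the classification of empty lattice polygons is also a genuinely different step. It gives a theoretical argument where the paper simply reads that fact off the computation.
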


\begin{proof}
Enumerate the subsets of the $16$ primitive vectors in $[-2,2]^2$.
Retain those that sum to zero and span $\RR^2$, and order each by angle.
These give exactly the polygons under consideration: convexity precludes
repeated primitive edge directions, and conversely each retained subset
closes to a convex unit-edge polygon.  The finite enumeration gives
$1{,}471$ edge multisets.  Testing their zero-sum partitions as in
Lemma~\ref{lem:partition} determines the deformation type and the number of
smoothing components.

For the reduction to equivalence classes, a finite bound is sufficient.  If
$g \in GL_2(\ZZ)$ carries one census edge multiset onto another, pick two
independent edges $u, v$ of the first, with images $u', v'$; then
$g = [\,u'\,|\,v'\,]\operatorname{adj}[\,u\,|\,v\,]/\det[\,u\,|\,v\,]$
with all eight entries of the two outer matrices in $[-2,2]$ and
$|\det[\,u\,|\,v\,]| \ge 1$, so every entry of $g$ is bounded by $8$.
Reduction under these matrices gives $218$ classes, including the standard
triangle.  Excluding it gives the $217$ classes in the statement; the
zero-sum partition test gives their stated types and the eight printed
reduced-rigid representatives.  The ancillary program supplies the candidate
enumeration, the polygon tests, and the bounded unimodular transformations
used in this calculation.\footnote{\texttt{toric\_census.py}.}
\end{proof}

The deformation types and numbers of smoothing components are
$GL_2(\ZZ)$-invariants of the edge multiset.  The results agree with the
classical cases: the conifold, the rigid $\frac13(1,1,1)$ cone,
Altmann's $\mathrm{dP}_6$ hexagon with its two smoothing components,
and the del Pezzo cones in Remark~\ref{rem:delpezzo}.
Reduced rigidity is not confined to polygons with few edges: for every
$k\ge3$ there are reduced-rigid unit-edge $k$-gons, for example with
$E(Q) = \{(1,a_1),\dots,(1,a_{k-1}),\, (-(k-1),-\textstyle\sum a_j)\}$ for
distinct $a_j$ with $\gcd(k-1,\sum a_j)=1$: any zero-sum subset must contain
the last vector and then all the others.

\begin{remark}[Anticanonical cones over toric del Pezzo surfaces]\label{rem:delpezzo}
The census contains exactly five classes with $i=1$ interior point.  If $Q$ is
unit-edge with $i=1$ interior lattice point, then $Q$ is reflexive (after
centering) and one checks
that $\sigma_Q^\vee$ is the cone at height one over the polar dual $Q^\circ$;
hence
\[
C(Q) \;\cong\; \Spec \textstyle\bigoplus_{m\ge 0} H^0\!\left(S,\,
\omega_S^{-m}\right),
\]
the anticanonical cone over the toric surface $S$ with moment polygon
$Q^\circ$.  For the five census classes the surfaces $S$ are smooth: they are
precisely the five smooth toric del Pezzo surfaces, and the census
reproduces the classical deformation theory of their cones
\cite[\S5.5]{Gross97} (which rests on \cite{Altmann97}):
\begin{center}
\adjustbox{max width=\textwidth}{%
\begin{tabular}{lcccc}
\toprule
$S$ & $\deg S$ & $[k,i]$ & $\dim T^1 = k-3$ & type \\
\midrule
$\PP^2$ & $9$ & $[3,1]$ & $0$ & rigid \\
$\FF_1$ & $8$ & $[4,1]$ & $1$ & reduced-rigid (fat point) \\
$\PP^1{\times}\PP^1$ & $8$ & $[4,1]$ & $1$ & smoothable, $1$ component \\
$\mathrm{dP}_7$ & $7$ & $[5,1]$ & $2$ & smoothable, $1$ component \\
$\mathrm{dP}_6$ & $6$ & $[6,1]$ & $3$ & smoothable, $2$ components \\
\bottomrule
\end{tabular}}
\end{center}
(The pair $[k,i]$ alone does not determine the class: $\FF_1$ and
$\PP^1{\times}\PP^1$ share $[4,1]$ and differ in type.)
In particular the second reduced-rigid class of Proposition~\ref{prop:census},
$Q_A$ with $E(Q_A)=\{(-2,-1),\allowbreak(0,-1),\allowbreak(1,0),\allowbreak(1,2)\}$, satisfies
$Q_A^\circ = $ the moment polygon of $(\FF_1, -K)$, so
\emph{$C(Q_A)$ is the anticanonical cone over $\FF_1$}, the exceptional
germ of \cite[Theorem~5.8]{Gross97}.  The deformable-but-non-smoothable
classes all have $i \ge 2$; they are cones over polarized toric surfaces that
are not anticanonically polarized del Pezzo surfaces, and so lie outside the
catalogue above.
\end{remark}

\section{Two-dimensional faces of reflexive polytopes and Batyrev
hypersurfaces}
\label{sec:planting}

Let $\widetilde N \cong \ZZ^4$ and let $\Delta \subset \widetilde N_\RR$ be a
reflexive polytope: a lattice polytope with $0$ in its interior whose polar
dual $\Delta^\circ=\{u : \langle u, v\rangle \ge -1 \ \forall v \in \Delta\}$
is again a lattice polytope.  Let $\PP_\Delta$ be the toric fourfold of the
face fan of $\Delta$; it is Gorenstein Fano, $-K_{\PP_\Delta}$ is ample and
globally generated, and its space of sections has the monomial basis
$\{\chi^m : m \in \Delta^\circ \cap \widetilde M\}$ \cite{Batyrev94,CLS}, where
$\widetilde M = \widetilde N^\vee$ is the dual lattice.  Throughout, $\Delta$ is the \emph{fan} polytope (Batyrev's
$\Delta^\ast$), so our $\Delta^\circ$ is his $\Delta$, the Newton polytope
of the anticanonical sections.

For a face $G \preceq \Delta$ write $\sigma_G$ for the cone over $G$,
$V(\sigma_G) \subset \PP_\Delta$ for the corresponding orbit closure, and
$G^\circ \preceq \Delta^\circ$ for the dual face.  If $F$ is a
two-dimensional face, then $\sigma_F$ is a three-dimensional cone,
$V(\sigma_F)$ is a toric curve $\cong \PP^1$, and $F^\circ$ is an edge of
$\Delta^\circ$; we write $\ell(F^\circ)$ for its lattice length.

\emph{Normalization convention.}  When listing example data we describe each
facet of $\Delta$ by the primitive integral functional $u$ normalized so that
the facet lies on $\{\langle u,\cdot\rangle = 1\}$ (we say $u$ is \emph{at
level one}); the corresponding vertex
of $\Delta^\circ$ is then $-u$, and the dual face $G^\circ$ of a face $G$ is
spanned by the \emph{negatives} of the facet functionals through $G$.  Since
lattice length is invariant under $u \mapsto -u$, we compute $\ell(F^\circ)$
directly on the listed functionals.\footnote{This matches the normalization
used by the ancillary programs.}  The
vertices $-u$ of $\Delta^\circ$ dual to the two facets of $\Delta$
containing $F$ restrict
to the sublattice $N_F = \widetilde N \cap \RR\sigma_F$ as a single integral
height function equal to $-1$ on $F$; its negative (the common
restriction of the two listed functionals) is a height function equal to
$+1$ on $F$, so $(\sigma_F, N_F)$ is the Gorenstein cone
over the lattice polygon $F$ in the height-one convention of
\S\ref{sec:local}, and we write $C(F)$ for the associated toric
threefold, computed in the lattice induced on the
affine span of $F$.

\begin{proposition}\label{prop:planting}
Let $\Delta \subset \ZZ^4$ be reflexive, all of whose edges have lattice
length one, and let $X \in |{-K_{\PP_\Delta}}|$ be generic.  Then:
\begin{enumerate}
\item $X$ is a Calabi--Yau threefold with Gorenstein canonical singularities.
\item For each two-dimensional face $F \preceq \Delta$ with $C(F)$ singular,
$X$ meets the curve $V(\sigma_F)$ in exactly $\ell(F^\circ)$ points, all in
the open orbit, and at each such point $p$ the germ $(X,p)$ is analytically
isomorphic to $(C(F), 0)$.
\item $X$ is smooth away from these points.
\end{enumerate}
\end{proposition}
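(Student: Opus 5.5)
The plan is to stratify $\PP_\Delta$ by torus orbits and analyse $X$ orbit by orbit, using Bertini for the large strata and a local toric chart computation for the small ones.

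\textbf{Step 1: part (1).} This is Batyrev's theorem \cite{Batyrev94}. Since $-K$ is base-point free, a generic $X$ is irreducible and normal with trivial dualizing sheaf, and $h^1(\mathcal O_X)=0$ by Kodaira vanishing on the Fano ambient space. Canonicity follows because $X$ is a Cartier divisor transverse to the orbit stratification, so a crepant toric resolution of $\PP_\Delta$ restricts to a crepant resolution of $X$.

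\textbf{Step 2: the orbit stratification.} For a cone $\sigma_G$ over a face $G$ of dimension $d$, the orbit $O(\sigma_G)$ is a torus of dimension $3-d$. Locally along it, $\PP_\Delta$ looks like $U_{\sigma_G}\cong C(G)\times(\CC^\ast)^{3-d}$, where $C(G)$ is the Gorenstein cone over $G$ in the lattice $N_G$.
\begin{itemize}
\item A $0$-dimensional $G$ (a vertex) is smooth.
\item An edge $G$ has unit length by hypothesis, so it is also smooth.
\item Hence $\operatorname{Sing}\PP_\Delta$ is the union of $V(\sigma_F)$ for $2$-faces $F$ with $C(F)$ singular, together with the isolated fixed points of facets.
\end{itemize}
Because $X$ is generic and base-point free, it misses the torus-fixed points. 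Indeed, the restriction of $-K$ to $V(\sigma_G)$ has sections given by the monomials on the dual face $G^\circ$, and for a facet $G$ the dual face $G^\circ$ is a single point, whose section vanishes nowhere.

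On the $1$-dimensional orbits $O(\sigma_F)\cong\CC^\ast$, the restriction of the defining section $s$ is the Laurent polynomial $\sum_{m\in F^\circ\cap\widetilde M}a_m\chi^m$. This is a polynomial of degree $\ell(F^\circ)$ in one character of $\CC^\ast$, times a unit. For generic coefficients it has exactly $\ell(F^\circ)$ distinct simple zeros, all in $\CC^\ast$. This gives the count in (2).

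\textbf{Step 3: the local form, the main obstacle.} On the chart $U_{\sigma_F}\cong C(F)\times\CC^\ast$, write $\chi^{m_0}$ for the vertex of $F^\circ$ where the height function is $-1$ on $F$. Dividing $s$ by $\chi^{m_0}$ writes the equation as
\[
g(w)+h(y,w)=0 .
\]
Here $w$ is the coordinate on $\CC^\ast$ and $g$ collects the terms with $m\in F^\circ$. The remaining monomials are characters that are positive on $\sigma_F$, hence vanish at the cone point; they form $h$, which lies in the maximal ideal of $C(F)$. At a simple root $w_0$ of $g$ we have $\partial_w(g+h)\neq0$, so the implicit function theorem lets us solve $w=w(y)$ analytically. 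Projecting to $C(F)$ then gives an isomorphism of germs $(X,p)\cong(C(F),0)$.

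The delicate points are:
\begin{itemize}
\item choosing the splitting $N=N_F\oplus\ZZ$ so that the chart really is a product;
\item checking that all monomials off $F^\circ$ restrict to functions vanishing on the orbit;
\item making the implicit-function step rigorous on the singular space $C(F)$. For this, embed $C(F)\subset\CC^n$ via generators of its coordinate ring, apply the theorem on $\CC^n\times\CC^\ast$, and restrict.
\end{itemize}

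\textbf{Step 4: part (3).} On orbits of dimension at least $2$, and on the smooth $1$-dimensional orbits, $\PP_\Delta$ is smooth. There Bertini for the base-point free system restricted to each orbit shows that $X$ is smooth and transverse. The intersection of $X$ with each such orbit is a smooth hypersurface because the restricted linear system is still base-point free, and transversality at orbit points gives smoothness of $X$ via the product charts. Combined with Steps 2 and 3, this shows the only singular points of $X$ are the $\ell(F^\circ)$ points on each singular curve $V(\sigma_F)$.
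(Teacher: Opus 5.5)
Your proposal is correct and follows essentially the same route as the paper: Batyrev for (1); the restriction of the monomial sections to $V(\sigma_F)$ (moment polytope $F^\circ$, degree $\ell(F^\circ)$) for the point count; the product chart $C(F)\times\CC^\ast$ with the implicit function theorem for the germ, where your check that the monomials off $F^\circ$ vanish along the orbit makes explicit the simple-zero claim the paper uses; and Bertini, smoothness of vertex and edge cones, and avoidance of fixed points for (3). One small correction in Step 1: a Gorenstein toric fourfold need not admit a crepant toric resolution, since terminal Gorenstein quotient points can survive, so canonicity should be argued via Batyrev's maximal projective crepant partial resolution, whose remaining singular points are torus-fixed and are avoided by the generic $X$.
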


\begin{proof}
(1) is Batyrev's theorem \cite[\S4]{Batyrev94}.

(2) Fix such an $F$.  The restriction map on anticanonical sections sends
$\chi^m$ to a non-zero section on $V(\sigma_F)$ exactly when
$m \in F^\circ \cap \widetilde M$: indeed $\chi^m$ vanishes identically on
$V(\sigma_F)$ unless $\langle m, \cdot \rangle$ attains its minimum $-1$ on
$F$.  The restricted line bundle is the toric line bundle on
$V(\sigma_F)\cong\PP^1$ with one-dimensional moment polytope $F^\circ$, of
degree $\ell(F^\circ)$ \cite[\S6.3]{CLS}, and the lattice points of $F^\circ$
restrict to the full monomial basis of $H^0(\PP^1,
\mathcal{O}(\ell(F^\circ)))$.  Hence for generic $X$ the intersection
$X \cap V(\sigma_F)$ consists of $\ell(F^\circ)$ distinct reduced points
avoiding the two torus-fixed points.

For the local structure, split $\widetilde N \cong N_F \oplus \ZZ$; then the
open toric chart of $\sigma_F$ is $U_{\sigma_F} \cong C(F) \times \CC^\ast$,
with $V(\sigma_F)\cap U_{\sigma_F} = \{0\} \times \CC^\ast$
\cite[Prop.~3.3.9]{CLS}.  Trivialize $-K$ on this chart and embed
$C(F) \subset \CC^r$ by a generating set of $\sigma_F^\vee \cap M_F$
(writing $M_F = N_F^\vee$); the
defining equation of $X$ extends to a function $\widetilde f(z, t)$ on
$\CC^r\times\CC^\ast$.  At a point $p=(0,t_0)$ of the transverse intersection
above, $\widetilde f(0, \cdot)$ has a simple zero at $t_0$, so
$\partial\widetilde f/\partial t (p) \neq 0$, and by the implicit function
theorem $\{\widetilde f = 0\}$ is, near $p$, the graph of an analytic function
$t = \varphi(z)$.  Intersecting with $C(F)\times\CC^\ast$ and projecting along
the graph yields an analytic isomorphism $(X, p) \cong (C(F), 0)$.

(3) $\Sing \PP_\Delta$ is the union of the orbit closures of the non-smooth
cones of the face fan.  Cones over vertices are smooth: every vertex
of a reflexive polytope is primitive, since a lattice point of the dual
facet of $\Delta^\circ$ would otherwise pair with it outside $\ZZ$.
The full-dimensional cones'
smooth loci are irrelevant for a generic hypersurface: since
$-K_{\PP_\Delta}$ is globally generated, Bertini gives that $X$ is smooth away
from $\Sing\PP_\Delta$, and $X$ avoids the finitely many torus-fixed points
(the section $\chi^{u_\Gamma}$, for $u_\Gamma$ the vertex of $\Delta^\circ$
dual to a facet $\Gamma$, is non-vanishing at the fixed point of
$\sigma_\Gamma$).  For a two-dimensional cone $\sigma_e$ over an edge $e$ of
$\Delta$ with vertices $v, w$: since $\ell(e)=1$ and $v$ lies at height $1$
with respect to the (restriction of the) supporting functional of any facet
containing $e$, the pair $(v, w-v)$ is a basis of $\widetilde N \cap
\RR\sigma_e$, so $\sigma_e$ is smooth.  Hence, under the all-unit-edges
hypothesis, $\Sing\PP_\Delta$ meets $X$ only in the curves $V(\sigma_F)$ of
part (2), and there only at the listed points.
\end{proof}

\begin{remark}\label{rem:longedges}
If $\Delta$ has an edge $e$ of lattice length $m \ge 2$, the same analysis
shows that the curve $X\cap V(\sigma_e)$ has generic transverse
singularity type $A_{m-1}$; here $V(\sigma_e)$ is a surface.  We impose the
all-unit-edges hypothesis only to keep the
singular locus zero-dimensional.  The local statement (2) at a
\emph{unit-edge} face $F$ is unaffected by longer edges elsewhere: the
points of $X$ on $V(\sigma_F)$ remain isolated singular points with germ
$C(F)$, and part (1) holds for any reflexive $\Delta$, so
Corollary~\ref{cor:global} below applies to every reflexive $\Delta$
possessing a unit-edge face of type \textup{(R)} or \textup{(D)}, whether
or not the remaining edges are unit.
\end{remark}

\section{Local-to-global: no smoothing}\label{sec:nosmoothing}

\begin{lemma}\label{lem:localglobal}
Let $X$ be a reduced projective variety with an isolated singular point $p$
such that no irreducible component of the reduced miniversal base space of the
germ $(X,p)$ has smooth general fibre.  Then for every flat proper family
$\mathcal{X} \to (T,0)$ over a reduced pointed base with
$\mathcal{X}_0 \cong X$, every fibre $\mathcal{X}_t$ for $t$ near $0$ is
singular.  In particular $X$ admits no smoothing.

If moreover the reduced miniversal base of $(X,p)$ is a point, then the
induced family of germs at $p$ is trivial: every $\mathcal{X}_t$ has a
singular point with germ isomorphic to $(X,p)$.
\end{lemma}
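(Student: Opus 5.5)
The plan is to localize the global family at $p$ and read off the conclusion from the miniversal deformation of the germ. Given a flat proper family $\mathcal{X}\to(T,0)$ with $T$ reduced and $\mathcal{X}_0\cong X$, I would restrict to a small analytic neighbourhood $U\subset\mathcal{X}$ of $p$. Then $U\to(T,0)$ is a flat deformation of the isolated singularity germ $(X,p)$. Since $(X,p)$ is isolated, it has a miniversal deformation $\mathcal{V}\to(S,0)$ (Grauert), so there is a morphism of germs $\phi\colon(T,0)\to(S,0)$ with $U\cong\phi^*\mathcal{V}$ near $p$. Because $T$ is reduced, $\phi$ factors through $S_{\mathrm{red}}$. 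We may replace $T$ by a neighbourhood of $0$ and work one irreducible component $T'$ of $T$ at a time, which is harmless because every point near $0$ lies on such a component. The image germ $\phi(T')$ is then irreducible, hence contained in some irreducible component $S_j$ of $S_{\mathrm{red}}$.

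Next I would show that every fibre of $\mathcal{V}$ over $S_j$ near $0$ is singular. By hypothesis the general fibre over $S_j$ is singular. The locus of $s\in S_j$ whose fibre $\mathcal{V}_s$ is singular near the special point is the image under the finite (proper) map $\mathrm{Sing}(\mathcal{V}/S)\to S$ of a closed analytic set. Its complement is open, so it is either dense or empty near $0$. If it were nonempty, it would be a nonempty open subset of the irreducible $S_j$, hence dense, and so the general fibre over $S_j$ would be smooth, contradicting the hypothesis. Hence for every $s\in S_j$ near $0$ the fibre $\mathcal{V}_s$ has a singular point in the chosen small representative. Pulling back along $\phi$, every fibre $\mathcal{X}_t$ with $t\in T'$ near $0$ has a singular point in $U$. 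Taking the union over the finitely many components $T'$ shows that every $\mathcal{X}_t$ near $0$ is singular, and therefore $X$ admits no smoothing.

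For the second statement, suppose $S_{\mathrm{red}}$ is a point. Then $\phi$ is constant, so the family $U\to T$ is isomorphic, as a germ along $p$, to the pullback of the trivial family $(X,p)\times T$. Hence each $\mathcal{X}_t$ contains a point (the image of $p$ under the trivialization) with germ isomorphic to $(X,p)$.

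The main obstacle is the precise meaning of "no component has smooth general fibre" and the openness step: one must use a good representative of the miniversal family, where $\mathrm{Sing}(\mathcal{V}/S)$ is finite over $S$, so that the singular-fibre locus is closed in $S$. I would invoke the standard theory of good representatives, for example in Looijenga's or Greuel--Lossen--Shustin's framework. A secondary subtlety is that flatness together with an isolated singularity lets one localize analytically, and that properness is not needed locally. Properness enters only through the statement that $\mathcal{X}_t$ is a global variety containing $U_t$.
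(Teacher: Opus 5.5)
Your proposal is correct and follows essentially the paper's route. The shared steps are: Grauert's miniversal deformation on a good (Milnor) representative; finiteness of the relative singular locus over the base, so the singular-fibre locus $D$ is closed analytic and the hypothesis forces $|D|=|S|$; pulling back along the classifying map; and constancy of that map when $S_{\mathrm{red}}$ is a point. Your component-by-component treatment of $T$ is harmless but unnecessary, since $|D|=|S|$ already makes every fibre of the miniversal family singular.
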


\begin{proof}
By Grauert \cite{Grauert72}, the isolated singularity $(X,p)$ has a miniversal
deformation $\mathcal{S} \to (S,0)$, which we represent on a suitable Milnor
representative; versality holds for analytic families over reduced base germs
after shrinking \cite[II.1]{GLS}.  The relative singular locus
$\Sigma \subset \mathcal{S}$ is closed analytic, and (after shrinking) the
projection $\Sigma \to S$ is finite: its central fibre is the
isolated point $p$, so on a Milnor representative, whose boundary the
nearby singular loci avoid, the projection is proper with finite
fibres.  Its image is therefore a closed analytic subset
$D \subseteq S$, the locus of singular fibres.  A component of
$S_{\mathrm{red}}$ is a smoothing component precisely when its underlying set is not
contained in $|D|$; by hypothesis there are none, so $|D|=|S|$ and
\emph{every} fibre of the miniversal family is singular.

Now let $\mathcal{X} \to (T,0)$ be as in the statement and restrict it to a
Milnor representative around $p$; after shrinking $T$ this restriction is
induced from $\mathcal{S} \to S$ by a base-change map $h : (T,0) \to (S,0)$.
Its fibres are fibres of the miniversal family, hence singular; so
$\mathcal{X}_t$ has a singular point near $p$ for every $t$ near $0$.

For the final statement: if $S_{\mathrm{red}}$ is a point, then $h$ factors
through $S_{\mathrm{red}}$ because $T$ is reduced, i.e.\ $h$ is constant, and
the induced family of germs is the pullback of the central fibre.
\end{proof}

\begin{remark}
Lemma~\ref{lem:localglobal} requires no Calabi--Yau hypothesis and no
obstruction theory, because the local model already forbids smooth nearby
fibres.  For the much finer relationship between local and global
deformations of a Calabi--Yau threefold with isolated canonical
singularities, in particular when local smoothings do exist and the
question is whether they are realized by global deformations, see Gross
\cite[\S\S1--2]{Gross97} and Namikawa's stratified local moduli
\cite{Namikawa02}.
\end{remark}

\begin{corollary}\label{cor:global}
Let $\Delta$ be a reflexive $4$-polytope possessing a two-dimensional face
$F$ whose edges have lattice length one and such that $F$ is of type
\textup{(R)} or \textup{(D)} in Theorem~\textup{\ref{thm:trichotomy}}.  Then the generic
anticanonical hypersurface $X \subset \PP_\Delta$ is a Calabi--Yau threefold
with canonical Gorenstein singularities which admits no smoothing.
\end{corollary}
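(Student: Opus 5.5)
The plan is to chain Proposition~\ref{prop:planting}, Remark~\ref{rem:longedges}, Theorem~\ref{thm:trichotomy} and Lemma~\ref{lem:localglobal}. First, the Calabi--Yau and canonical Gorenstein assertions follow from part (1) of Proposition~\ref{prop:planting}, i.e.\ Batyrev's theorem. By Remark~\ref{rem:longedges}, part (1) holds for every reflexive $\Delta$, so we do not need all edges of $\Delta$ to be unit. Since $F$ is of type (R) or (D), it is not a standard triangle, so $C(F)$ is singular.

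Next, we locate a singular point with germ $C(F)$ on the generic $X$. We apply part (2) of Proposition~\ref{prop:planting} to $F$. Its proof only uses the restriction of anticanonical sections to $V(\sigma_F)$ and the chart $U_{\sigma_F}\cong C(F)\times\CC^\ast$; it needs unit edges of $F$ only, which is the content of Remark~\ref{rem:longedges}. Hence $X$ meets $V(\sigma_F)$ in $\ell(F^\circ)\ge 1$ points of the open orbit. At each such point $p$ we have $(X,p)\cong(C(F),0)$, and $p$ is an isolated singular point of $X$, because $C(F)$ has an isolated singularity when $F$ has unit edges.

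We then verify the hypothesis of Lemma~\ref{lem:localglobal} at $p$. By Theorem~\ref{thm:trichotomy}, in case (R) the reduced miniversal base is a point, and the germ itself is singular, so its only fibre is not smooth. In case (D), part (2) of the theorem says no component of the reduced miniversal base is a smoothing component, so no component has smooth general fibre. Lemma~\ref{lem:localglobal} then shows that every flat proper family over a reduced pointed base with central fibre $X$ has singular fibres near $0$. By definition, a smoothing is such a family over a disc, so $X$ admits no smoothing.

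The step needing most care is the isolatedness of $p$ as a singular point of $X$ when $\Delta$ has long edges elsewhere. Other components of $\Sing X$, such as curves of $A_{m-1}$ singularities from long edges, could a priori pass through $p$. They do not: $p$ lies in the open orbit of $V(\sigma_F)$, and there $\PP_\Delta$ is locally $C(F)\times\CC^\ast$, whose singular locus is the curve $\{0\}\times\CC^\ast$. So near $p$ the singular locus of $X$ is just $p$. Lemma~\ref{lem:localglobal} is stated for the germ at a single isolated singular point, which is all this argument needs, so its global hypothesis is met.
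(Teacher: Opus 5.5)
Your proposal is correct and follows the paper's proof: Proposition~\ref{prop:planting} together with Remark~\ref{rem:longedges} produces $\ell(F^\circ)\ge 1$ points with germ $C(F)$, Theorem~\ref{thm:trichotomy} rules out smoothing components in cases (R) and (D), and Lemma~\ref{lem:localglobal} then excludes any smoothing. Your last paragraph is harmless but not needed, since the isomorphism $(X,p)\cong(C(F),0)$, with $C(F)$ an isolated singularity, already shows that $p$ is an isolated singular point of $X$.
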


\begin{proof}
By Proposition~\ref{prop:planting} and Remark~\ref{rem:longedges}, $X$ has
$\ell(F^\circ) \ge 1$ points with
germ $C(F)$; by Theorem~\ref{thm:trichotomy} the miniversal base of this germ
has no smoothing component; by Lemma~\ref{lem:localglobal} no deformation of
$X$ over a reduced base (in particular no smoothing over a disc)
has smooth general fibre.
\end{proof}

\section{The examples}\label{sec:examples}

We first construct the three-point examples used to explain the method,
then the single-point examples stated in the introduction.  Write
$e_1,\dots,e_4$ for the standard basis of $\ZZ^4$.
The first two examples realize a non-smoothable polygon $Q$ from \S\ref{sec:local} as a
two-dimensional face of a reflexive $4$-polytope of the simple shape
$\conv\left(Q \times \{(1,0)\} \cup \{e_4,\ w\}\right)$, found by a small
computer search over the completing vertex $w$.  (Placing $Q$ in the plane
$x_3=1,\,x_4=0$ embeds it in a saturated affine sublattice, so the face
polygon carries the intrinsic lattice structure of $Q$; translations of $Q$
within that plane and the normalization of the first completing vertex to
$e_4$ are unimodular coordinate changes.)

\subsection{A Calabi--Yau threefold with three \texorpdfstring{$\FF_1$}{F1}-cone points}

\begin{theorem}\label{thm:A}
Let $Q_A$ be the reduced-rigid quadrilateral with
$E(Q_A)=\{(-2,-1),\allowbreak(0,-1),\allowbreak(1,0),\allowbreak(1,2)\}$, embedded with vertices
$(0,0),(-2,-1),(-2,-2),(-1,-2)$, and let
\[
\Delta_A=\conv\bigl(Q_A\times\{(1,0)\}\ \cup\ \{e_4,\ (1,1,-1,-1)\}\bigr)
\subset \ZZ^4 .
\]
Then:
\begin{enumerate}
\item $\Delta_A$ is reflexive with $6$ vertices and $6$ facets, and all $13$
edges of $\Delta_A$ have lattice length one.  Its $13$ two-dimensional faces
consist of the face $F_A = Q_A\times\{(1,0)\}$ and $12$ standard triangles.
\item The generic anticanonical hypersurface $X_A \subset \PP_{\Delta_A}$ is a
Calabi--Yau threefold whose singular locus consists of exactly
$\ell(F_A^\circ) = 3$ points, at each of which the germ of $X_A$ is the
anticanonical cone over $\FF_1$.
\item Every deformation of $X_A$ over a reduced base preserves the
three singular germs up to analytic isomorphism.  In particular $X_A$ admits
no smoothing.
\item Batyrev's formulas give $(h^{1,1},h^{2,1}) = (5,101)$, hence Euler
number $-192$, for the maximal projective crepant partial resolution
$\widehat X_A$.
\end{enumerate}
\end{theorem}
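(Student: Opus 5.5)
The plan is to reduce everything to a finite, explicit computation on $\Delta_A$ and then feed the result into Proposition~\ref{prop:planting}, Theorem~\ref{thm:trichotomy}, Remark~\ref{rem:delpezzo} and Lemma~\ref{lem:localglobal}. Write the vertices as $v_1,\dots,v_4=(0,0,1,0),(-2,-1,1,0),(-2,-2,1,0),(-1,-2,1,0)$, together with $e_4$ and $w=(1,1,-1,-1)$.

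\emph{Step 1 (facet data, for (1)).} I will list six primitive functionals $u$ and check three things for each: $\langle u,\cdot\rangle\le 1$ on all six vertices, equality on at least four affinely independent vertices, and integrality of $u$. The two facets through $F_A$ are forced. A functional equal to $1$ on the plane $x_3=1,x_4=0$ has the form $(0,0,1,c)$. Taking $c=1$ makes it $1$ at $e_4$, giving $u_1=(0,0,1,1)$. Taking $c=-2$ makes it $1$ at $w$, giving $u_2=(0,0,1,-2)$. Each of the four edges of $Q_A$, together with $e_4$ and $w$, should span one further facet, of the form $\conv(\text{edge}\cup\{e_4,w\})$, so $6$ facets in total. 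For each such facet I will solve the linear system, check that its functional is integral, and check that the other two vertices of $Q_A$ lie strictly below level one.

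Integrality of all facet functionals at level one is exactly reflexivity, since $0$ is visibly interior. From the vertex–facet incidence matrix I then read off the face lattice: $13$ edges and $13$ two-faces. The two-faces are $F_A$ and the triangles $\conv(\text{vertex or edge pieces} \cup\dots)$, i.e.\ triangles $\{a,b,e_4\}$, $\{a,b,w\}$, $\{a,e_4,w\}$ for edges $[a,b]$ and vertices $a$ of $Q_A$. Each of these is checked to be unimodular in its induced lattice by a $2\times2$ determinant. Unit length of the $13$ edges is a primitivity check on the difference vectors.

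\emph{Step 2 (singularities, for (2)).} The dual edge $F_A^\circ$ joins $-u_1$ and $-u_2$, whose difference is $(0,0,0,3)$, so $\ell(F_A^\circ)=3$. The standard triangles give smooth cones. Proposition~\ref{prop:planting}(2),(3) then says that $X_A$ has exactly three singular points, each with germ $C(Q_A)$. Remark~\ref{rem:delpezzo} identifies $C(Q_A)$ with the anticanonical cone over $\FF_1$.

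\emph{Step 3 (for (3)).} By Proposition~\ref{prop:census}, $Q_A$ is of type (R). Concretely, no proper sub-multiset of $\{(-2,-1),(0,-1),(1,0),(1,2)\}$ sums to zero, which is checked directly. Hence Theorem~\ref{thm:trichotomy} shows the reduced miniversal base is a point. The second part of Lemma~\ref{lem:localglobal}, applied at each of the three points, gives preservation of the germs and non-smoothability.

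\emph{Step 4 (Hodge numbers, for (4)).} I will apply Batyrev's formulas with $\Delta=\Delta_A$ the fan polytope and $\Delta^\circ$ its Newton polytope:
\[
h^{1,1}=\ell(\Delta)-5-\sum_{\mathrm{codim}\,\Gamma=1}\ell^*(\Gamma)+\sum_{\mathrm{codim}\,\Theta=2}\ell^*(\Theta)\,\ell^*(\Theta^\circ),
\]
and symmetrically for $h^{2,1}$ with the roles of $\Delta$ and $\Delta^\circ$ exchanged. The inputs are lattice-point counts. For $\Delta_A$, the only non-vertex lattice points should lie in the interior of $F_A$ or elsewhere, and these counts are easy. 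For $\Delta^\circ$, the interior points of its facets and of its two-faces must be counted, and this I will do by computer enumeration. The expected output is $(5,101)$, and then $\chi=2(5-101)=-192$.

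The main obstacle is the bookkeeping in Steps 1 and 4. The first is confirming that the six facets listed are all of them, which can be done via Euler's relation or by showing that every ridge lies in exactly two listed facets. The second is counting lattice points of $\Delta^\circ$. I would rely on the ancillary exact-arithmetic programs for both, and cross-check $h^{1,1}$ by hand: the $4$ from the fan rays ($6-4$ plus corrections) plus one from the interior point of $Q_A$ with multiplicity given by the dual edge.
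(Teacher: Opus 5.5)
Your proposal is correct and is essentially the paper's own proof. The four remaining facets you predict, $\conv(\text{edge}\cup\{e_4,w\})$, are exactly the paper's normals $(-3,6,1,1),(6,-3,1,1),(-3,0,-5,1),(0,-3,-5,1)$, and you handle parts (2)--(4) just as the paper does: Proposition~\ref{prop:planting} with Remark~\ref{rem:delpezzo}, the $14$-subset check with Theorem~\ref{thm:trichotomy} and the final clause of Lemma~\ref{lem:localglobal}, and Batyrev's formulas with machine lattice-point counts.

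One correction: your hand cross-check of $h^{1,1}$ is muddled as phrased. Cleanly, $|\Delta_A\cap\ZZ^4|=8$, no facet has interior lattice points, and the only correction term is $\ell^*(F_A)\,\ell^*(F_A^\circ)=1\cdot 2$, so $h^{1,1}=8-5+2=5$. Equivalently, the rays give $6-4=2$, and the interior point of $Q_A$ gives $\ell(F_A^\circ)=3$, one exceptional divisor over each singular point.
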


\begin{proof}
(1) is a finite verification.  The six facet inequalities
$\langle u, \cdot\rangle \le 1$ of $\Delta_A$ have primitive normals
\[
(0,0,1,1),\ (0,0,1,-2),\ (-3,6,1,1),\ (6,-3,1,1),\ (-3,0,-5,1),\
(0,-3,-5,1),
\]
each at level exactly $1$, which is reflexivity; the face $F_A$ is the
intersection of $\Delta_A$ with the two supporting hyperplanes of the first
two normals.  The remaining two-dimensional faces admit a uniform
description: writing $w=(1,1,-1,-1)$ for the second completing vertex, the
segment $[e_4,w]$ is an edge of $\Delta_A$, and the twelve triangles are the
joins of the four edges of $F_A$ with $e_4$, the joins of those edges with
$w$, and the joins of the four vertices of $F_A$ with the edge $[e_4,w]$;
each is unimodular, and every edge of $\Delta_A$ has lattice length one
(all verified in exact arithmetic).  The dual edge $F_A^\circ$ is spanned by the negatives
of the first two normals (the convention of \S\ref{sec:planting}), of lattice
length $\gcd\bigl((0,0,1,1)-(0,0,1,-2)\bigr)=3$.

(2) By Proposition~\ref{prop:planting}, the singular points of $X_A$ are: for
$F_A$, three points with germ $C(Q_A)$; for the twelve standard-triangle
faces, none, as their cones are smooth.  By Remark~\ref{rem:delpezzo},
$C(Q_A)$ is the anticanonical cone over $\FF_1$.

(3) $Q_A$ is reduced-rigid: $E(Q_A)$ has no proper zero-sum sub-multiset (a direct
check of the $14$ proper non-empty subsets).  By
Theorem~\ref{thm:trichotomy}(R) the reduced miniversal base of each germ is a
point, and Lemma~\ref{lem:localglobal} applies, including its final statement.

(4) With $|\Delta_A\cap\ZZ^4|=8$ and $|\Delta_A^\circ\cap\ZZ^4|=130$ lattice
points and
the face data above, Batyrev's formulas \cite{Batyrev94} for the Hodge numbers
of $\widehat X_A$ give $h^{1,1}=5$ and $h^{2,1}=101$.\footnote{As
validation, the same computation gives $(1,101)$ for the quintic simplex
and $(4,112)$ for $X_9\subset\PP(1,1,1,3,3)$.}
\end{proof}

\begin{remark}
The three germs of $X_A$ have $\dim T^1 = 1$ but fat-point miniversal bases:
each germ has first-order deformations, all obstructed at higher order.
This is exactly the local model excluded in Gross's smoothability theorem for
primitive type~II contractions \cite[Theorem~5.8, \S5.5]{Gross97}, here
realized three times on an explicit compact Calabi--Yau threefold in the
Kreuzer--Skarke list.
\end{remark}

\subsection{Three locally deformable non-smoothable singular points}

\begin{theorem}\label{thm:B}
Let $Q_B = T + s$ be the type-\textup{(D)} pentagon of
Example~\textup{\ref{ex:pentagon}},
embedded with vertices $(0,0),(-2,-1),(-3,-2),(-2,-3),(-1,-2)$, and let
\[
\Delta_B=\conv\bigl(Q_B\times\{(1,0)\}\ \cup\ \{e_4,\ (1,1,-1,-1)\}\bigr)
\subset \ZZ^4 .
\]
Then:
\begin{enumerate}
\item $\Delta_B$ is reflexive with $7$ vertices and $7$ facets; all $16$ edges
have lattice length one; its $16$ two-dimensional faces are
$F_B = Q_B \times \{(1,0)\}$ and $15$ standard triangles.
\item The generic anticanonical $X_B \subset \PP_{\Delta_B}$ is a Calabi--Yau
threefold whose singular locus consists of exactly $\ell(F_B^\circ)=3$ points,
each with germ $C(Q_B)$.
\item Each singular germ of $X_B$ admits a non-trivial one-parameter
deformation: its reduced miniversal base is a smooth curve.  Nevertheless the
miniversal base has no smoothing component, and $X_B$ admits no smoothing.
\item $(h^{1,1},h^{2,1}) = (9,81)$ for the maximal projective crepant partial
resolution, hence Euler number $-144$.
\end{enumerate}
\end{theorem}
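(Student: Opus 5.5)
The plan mirrors the proof of Theorem~\ref{thm:A}, with $Q_A$ replaced by $Q_B$; part~(3) draws on Example~\ref{ex:pentagon} and Lemma~\ref{lem:cascade}.

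\emph{Step 1: the combinatorics in (1).} This is a finite exact-arithmetic verification.
\begin{itemize}
\item Compute the facet normals of $\Delta_B=\conv(Q_B\times\{(1,0)\}\cup\{e_4,w\})$, where $w=(1,1,-1,-1)$. We expect seven facets. Two of them should again be $(0,0,1,1)$ and $(0,0,1,-2)$, since $e_4$ and $w$ pair with these to $1$ and $1$, and to $-2$ and $1$, respectively. The other five should be the joins of the five edges of $F_B$ with the edge $[e_4,w]$.
\item Check that each normal is primitive and takes the value exactly $1$ on its facet; this is reflexivity.
\item List the two-dimensional faces: $F_B$, the five joins (edge of $F_B$)$*e_4$, the five joins (edge of $F_B$)$*w$, and the five joins (vertex of $F_B$)$*[e_4,w]$. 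That gives $16$ faces.
\item List the edges: the $5$ edges of $F_B$, the $10$ segments from $e_4$ and $w$ to the vertices of $F_B$, and $[e_4,w]$. That gives $16$ edges, each of which should be checked to have lattice length one.
\item Check that each triangle has normalized area one. Because $F_B$ sits in the saturated plane $x_3=1,\ x_4=0$, this reduces to small determinant computations.
\item The dual edge $F_B^\circ$ is spanned by the negatives of the two normals containing $F_B$. Its length is $\gcd\bigl((0,0,0,3)\bigr)=3$, provided those two normals are indeed the facets through $F_B$.
\end{itemize}

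\emph{Step 2: the singular locus in (2).} Apply Proposition~\ref{prop:planting}. The fifteen standard-triangle faces have smooth cones and contribute nothing. The face $F_B$ contributes $\ell(F_B^\circ)=3$ points, each with germ $C(F_B)\cong C(Q_B)$, using the saturated embedding noted at the start of \S\ref{sec:examples}.

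\emph{Step 3: the deformation statements in (3).}
\begin{itemize}
\item By Example~\ref{ex:pentagon}, the only proper zero-sum sub-multisets of $E(Q_B)$ are $\{(-1,-1),(1,1)\}$ and the triple $E(T)$, which has $\det=3$. Hence $Q_B$ has exactly one nontrivial maximal Minkowski decomposition, namely $T+s$.
\item By Theorem~\ref{thm:trichotomy}(1), the reduced miniversal base is therefore a single smooth component of dimension $2-1=1$.
\item By Theorem~\ref{thm:trichotomy}(2), this component is not a smoothing component, since $T$ is not a standard triangle.
\item Lemma~\ref{lem:cascade}(2) confirms the curve is nontrivial: its general fibre carries a $\frac13(1,1,1)$ point.
\item Corollary~\ref{cor:global}, equivalently Lemma~\ref{lem:localglobal}, then shows that $X_B$ admits no smoothing.
\end{itemize}

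\emph{Step 4: the Hodge numbers in (4).}
\begin{itemize}
\item Count the lattice points of $\Delta_B$ and of $\Delta_B^\circ$.
\item Use the face data from Step~1, in particular the interior points of $F_B$ and of its dual edge.
\item Evaluate Batyrev's formulas for $h^{1,1}$ and $h^{2,1}$, expecting $(9,81)$ and hence Euler number $2(9-81)=-144$.
\end{itemize}
Since $Q_B$ has $i=3$ interior points against $i=1$ for $Q_A$, the correction term in $h^{1,1}$ grows. With multiplicity $\ell^*(F_B^\circ)=2$, this accounts in part for the jump from $5$ to $9$.

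\emph{Expected obstacle.} The main obstacle is the exhaustive face-lattice verification in Step~1. One must confirm that there are no further facets and no hidden long edges, and that the lattice-point count of $\Delta_B^\circ$ feeding Batyrev's formula is correct. As in Theorem~\ref{thm:A}, I would do this by machine in exact arithmetic, cross-checked by the uniform join description of the faces.
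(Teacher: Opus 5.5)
Your proposal follows the paper's proof. It uses the same seven normals (the two through $F_B$ plus the five joins of the edges of $F_B$ with $[e_4,w]$), the same join description of the $16$ two-faces, Proposition~\ref{prop:planting} for (2), Example~\ref{ex:pentagon} with Theorem~\ref{thm:trichotomy} and Lemma~\ref{lem:localglobal} for (3), and Batyrev's formulas for (4).

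Two slips need fixing. First, $\langle(0,0,1,1),w\rangle=-2$, so $e_4$ pairs with the two normals to $1,-2$ and $w$ to $-2,1$. Second, $Q_B$ has $i=2$ interior points, not $3$: its area is $7/2$ with five boundary points. This matches $|\Delta_B\cap\ZZ^4|=10$. The correction term is then $2\cdot 2=4$ and $h^{1,1}=10-5-0+4=9$, exactly accounting for the jump from $5$. With $i=3$ you would get $11-5+6=12$ instead.
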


\begin{proof}
(1) The seven facet normals are
\[
\begin{gathered}
(0,0,1,1),\ (0,0,1,-2),\ (-3,6,1,1),\ (6,-3,1,1),\\
(-3,3,-2,1),\ (-1,-1,-4,1),\ (3,-3,-2,1),
\end{gathered}
\]
each at level $1$; the rest is finite verification as in
Theorem~\ref{thm:A}(1), with $F_B^\circ$ again of length $3$; the fifteen
triangles are again the joins of the five edges of $F_B$ with each
completing vertex and of the five vertices of $F_B$ with the edge joining
the completing vertices.
(2) is Proposition~\ref{prop:planting} plus the face inventory.
(3) is Example~\ref{ex:pentagon} combined with
Theorem~\ref{thm:trichotomy} and Lemma~\ref{lem:localglobal}.
(4) is computed as in Theorem~\ref{thm:A}(4), with
$|\Delta_B\cap\ZZ^4|=10$, $|\Delta_B^\circ\cap\ZZ^4|=104$.
\end{proof}

\begin{remark}\label{rem:stuck}
By Lemma~\ref{lem:cascade}, the available local deformation at each singular
point of $X_B$ replaces the pentagon cone by a $\frac13(1,1,1)$ point.  The
global deformation construction of \cite[Proposition~4.7]{paper2} applies here:
it gives a deformation of $X_B$, induced by an explicit trinomial deformation
of the ambient pair, whose general neighbouring member has three such rigid
quotient points.  Thus these local deformations are globally realized,
and their residual quotient germs admit no further deformation.
This differs from the global obstruction in \cite[Example~4.1]{Gross97},
where locally smoothable germs cannot be smoothed by a deformation of the
compact threefold: here each initial germ itself has no smoothing component.
\end{remark}

\subsection{A single \texorpdfstring{$\FF_1$}{F1}-cone point}

The examples above have three singular points because the dual edge of the
prescribed face has lattice length $3$.  Within the two-vertex family this
cannot be pushed below $2$.

\begin{lemma}[two adjoined vertices force $\ell(F^\circ)\ge2$]
\label{lem:twovertices}
Let $Q$ be a unit-edge lattice polygon, let
$\Delta = \conv\bigl(Q\times\{(1,0)\}\cup\{e_4, w\}\bigr) \subset \ZZ^4$ be
reflexive, and let $F = Q\times\{(1,0)\}$.  Then $F$ is a
two-dimensional face of $\Delta$ and $\ell(F^\circ) \ge 2$.
\end{lemma}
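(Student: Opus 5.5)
The plan is to compute the two facets of $\Delta$ through $F$ explicitly and read off $\ell(F^\circ)$ as the difference of one integer parameter. Write $w=(w_1,w_2,w_3,w_4)$. Suppose a facet functional $u=(a,b,c,d)$ (at level one, as in the normalization convention of \S\ref{sec:planting}) is equal to $1$ on all of $F$. Then $aq_1+bq_2+c=1$ for every $q\in Q$. Since $Q$ is two-dimensional, this forces $a=b=0$ and $c=1$. So every supporting functional through $F$ has the form $u_d=(0,0,1,d)$ with $d\in\ZZ$. The condition $u_d\le 1$ on $\Delta$ then reduces to two inequalities, one for each adjoined vertex:
\[
d=\langle u_d,e_4\rangle\le 1,\qquad w_3+dw_4=\langle u_d,w\rangle\le 1 .
\]

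Next I would pin down the signs of $w_3$ and $w_4$ from the fact that $0$ lies in the interior of $\Delta$. Write $0$ as a convex combination with strictly positive weights $\lambda_Q$ (spread over points of $F$), $\lambda_e$ on $e_4$, and $\lambda_w$ on $w$; interiority allows every weight to be positive. The third coordinate gives $\lambda_Q+\lambda_w w_3=0$, so $w_3<0$, and hence $w_3\le -1$. The fourth coordinate gives $\lambda_e+\lambda_w w_4=0$, so $w_4<0$. Substituting into the inequalities above, the admissible values of $d$ form the interval
\[
d_{\min}=-\frac{1-w_3}{|w_4|}\;\le\; d\;\le\; 1 = d_{\max}.
\]
This interval is non-degenerate because $1-w_3\ge 2>0$. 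Its two endpoints give the only functionals $u_d$ that are tight at a point off $F$ (namely $e_4$, respectively $w$). Since $\Delta$ is the convex hull of $F\cup\{e_4,w\}$, the corresponding hyperplanes contain three affinely independent points beyond a triangle of $F$, so they cut out facets $\Gamma_1\ni e_4$ and $\Gamma_2\ni w$. Moreover $\Gamma_1\cap\Gamma_2$ consists exactly of the points of $\Delta$ with $x_3=1$ and $x_4=0$. In a convex combination, any weight on $e_4$ or $w$ pushes the point off one of the two hyperplanes, so $\Gamma_1\cap\Gamma_2=F$. Hence $F$ is a two-dimensional face, and $F^\circ$ is the segment joining $-u_{d_{\min}}$ and $-u_{1}$.

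Finally, reflexivity says that every vertex of $\Delta^\circ$ is a lattice point, so $d_{\min}\in\ZZ$. Since $1-w_3>0$, we have $d_{\min}<0$, and therefore $d_{\min}\le -1$. The lattice length is then
\[
\ell(F^\circ)=\gcd\bigl(u_1-u_{d_{\min}}\bigr)=1-d_{\min}\ge 2 .
\]
The only delicate step is the interiority argument that yields strict negativity of both $w_3$ and $w_4$; I would justify it by noting that $0$ lies in the interior of the simplex-like hull, so a representation with all weights positive exists. Everything else is bookkeeping with the normalization convention of \S\ref{sec:planting}.
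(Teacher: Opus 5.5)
Your proof is correct and follows essentially the same route as the paper's. The shared steps are the pencil $(0,0,1,d)$ of level-one functionals through $F$, interiority of the origin giving $w_3\le -1$ and $w_4<0$, the extreme values $d_{\max}=1$ and $d_{\min}=(1-w_3)/w_4$, and integrality from reflexivity forcing $d_{\min}\le -1$.

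The only cosmetic difference is how you show $F$ is a face. You write it as the intersection $\Gamma_1\cap\Gamma_2$ of the two extreme facets, whereas the paper uses the single functional $(0,0,1,0)$, which supports $\Delta$ exactly along $F$. One wording fix: your phrase ``three affinely independent points beyond a triangle of $F$'' should say a triangle of $F$ together with one point ($e_4$, respectively $w$) off its affine span.
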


\begin{proof}
$F$ is a face: reflexivity puts $0$ in the interior, which (as shown
below) forces $w_3 \le -1$, so $u_0 = (0,0,1,0)$ takes the value $1$ on
$Q\times\{(1,0)\}$ and less on $e_4$ and $w$, i.e.\ it supports
$\Delta$ exactly along $F$.  Every facet of $\Delta$ containing
$F$ has normal of the form $u_t = (0,0,1,t)$ with
$t \in \ZZ$ (these are exactly the integral functionals restricting to $1$
on the affine span of $F$), and the two facets through $F$ are the extreme
supporting members of this pencil: for
$t_{\min} < t < t_{\max}$ the face of $u_t$ is
$\operatorname{face}(u_{t_{\min}}) \cap \operatorname{face}(u_{t_{\max}})
= F$, so the interval $[t_{\min}, t_{\max}]$ is exactly $F^\circ$ and
$\ell(F^\circ) = t_{\max} - t_{\min}$.  With completing vertices $e_4$ and
$w = (w_1,w_2,w_3,w_4)$ the bound $\ell(F^\circ)\ge 2$ is forced, as
follows.  All vertices of $Q\times\{(1,0)\}$ have $x_3 = 1$ and $x_4 = 0$,
and $e_4$ has $x_3 = 0$, $x_4=1$; for the origin to be interior some
vertex must have negative third coordinate and some vertex negative
fourth coordinate, so
$w_3 \le -1$ and $w_4 \le -1$.  The member $u_t$ supports $\Delta$ if and
only if $\langle u_t, e_4\rangle = t \le 1$ and
$\langle u_t, w\rangle = w_3 + t w_4 \le 1$; since $w_4 < 0$ the second
constraint bounds $t$ from below only, so $t_{\max} = 1$, attained on the
facet through $e_4$.  The facet at the lower extreme must contain a vertex
outside the plane of $F$, necessarily $w$, so
$t_{\min} = (1-w_3)/w_4$, an integer with positive numerator ($\ge 2$) and
negative denominator; hence $t_{\min} \le -1$ and
$\ell(F^\circ) = 1 - t_{\min} \ge 2$.
\end{proof}

A third adjoined vertex in $\{x_3 = 1\}$ removes the interiority constraint
that forced $w_3 \le -1$, and realizes $t_{\min} = 0$:

\begin{theorem}\label{thm:C}
Let $Q_A$ be embedded as in Theorem~\textup{\ref{thm:A}} and let
\[
\Delta_C=\conv\bigl(Q_A\times\{(1,0)\}\ \cup\ \{e_4,\ (1,1,-1,0),\
(-2,-2,1,-1)\}\bigr)\subset\ZZ^4 .
\]
Then:
\begin{enumerate}
\item $\Delta_C$ is reflexive with $7$ vertices and $10$ facets; all $18$
edges have lattice length one; its $21$ two-dimensional faces are
$F_C = Q_A\times\{(1,0)\}$ and $20$ standard triangles; and the dual edge
$F_C^\circ$, spanned by the negatives of the facet functionals
$(0,0,1,0)$ and $(0,0,1,1)$, has lattice length $1$.
\item The generic anticanonical hypersurface $X_C \subset \PP_{\Delta_C}$ is
a Calabi--Yau threefold whose singular locus is a \emph{single} point, at
which the germ is the anticanonical cone over $\FF_1$.  Every deformation of
$X_C$ over a reduced base preserves this germ; in particular
$X_C$ admits no smoothing.
\item The maximal projective crepant partial resolution $\widehat X_C$ has
$(h^{1,1},h^{2,1}) = (4,108)$, hence Euler number $-208$.
\end{enumerate}
\end{theorem}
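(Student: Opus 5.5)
The proof follows the template of Theorem~\ref{thm:A}. Part (1) is a finite exact-arithmetic verification, parts (2) and (3) come from the general machinery, and (3) is Batyrev's formula.

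For (1), I would list the ten facet normals $u$ of $\Delta_C$ in the convention of \S\ref{sec:planting} and check three things: each is primitive and at level exactly $1$ on its facet, which gives reflexivity; every vertex satisfies $\langle u,\cdot\rangle\le 1$; and each facet contains at least four affinely independent vertices. The normals $(0,0,1,0)$ and $(0,0,1,1)$ should both be among them. The first supports the vertices of $F_C$ and the new vertex $(-2,-2,1,-1)$, which has $x_3=1$ and pairing $1$ with $(0,0,1,0)$. The second supports $F_C$ and $e_4$. Both pair to less than $1$ with $(1,1,-1,0)$.

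Next I would show that the intersection of these two facets is exactly $F_C$. For this I would argue as in Lemma~\ref{lem:twovertices}: the functionals equal to $1$ on the affine span of $F_C$ form the pencil $u_t=(0,0,1,t)$. Here $t\le 1$ is forced by $e_4$ and $t\ge 0$ by $(-2,-2,1,-1)$, while $(1,1,-1,0)$ imposes nothing. Hence $F_C^\circ$ corresponds to $t\in[0,1]$ and has lattice length $1$. The third vertex lies at $x_3=1$, so the interiority argument of Lemma~\ref{lem:twovertices} no longer forces $t_{\min}\le -1$; that is exactly what this example exploits.

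The remaining face data, namely the $18$ edges of lattice length one and the $20$ further two-dimensional faces being unimodular triangles, I would obtain from the vertex--facet incidence matrix. The $2$-faces are the maximal vertex sets shared by pairs of adjacent facets. For each triangle $\{a,b,c\}$ I would check that $b-a,\,c-a$ span a saturated rank-two sublattice, for instance via the gcd of the $2\times2$ minors. For each edge I would check that the difference of its endpoints is primitive.

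For (2), I would apply Proposition~\ref{prop:planting}. Since all edges have length one and every $2$-face other than $F_C$ is a standard triangle with smooth cone, the only singularities of $X_C$ are the $\ell(F_C^\circ)=1$ point on $V(\sigma_{F_C})$, with germ $C(Q_A)$. By Remark~\ref{rem:delpezzo}, $C(Q_A)$ is the anticanonical cone over $\FF_1$. By Theorem~\ref{thm:A}(3), $E(Q_A)$ has no proper zero-sum sub-multiset, so Theorem~\ref{thm:trichotomy}(R) shows the reduced miniversal base is a point. The final clause of Lemma~\ref{lem:localglobal} then gives that every deformation over a reduced base preserves the germ, and in particular that $X_C$ admits no smoothing.

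For (3), I would count $|\Delta_C\cap\ZZ^4|$ and $|\Delta_C^\circ\cap\ZZ^4|$ and the interior points of faces and dual faces. Batyrev's formulas then give $h^{1,1}=4$ and $h^{2,1}=108$, and the Euler number is $2(4-108)=-208$.

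The main obstacle is (1): certifying the complete facet list and face lattice of a $7$-vertex, $10$-facet polytope without omissions. I would do this by computing all facets by convex-hull enumeration and cross-checking with the Euler relation $f_0-f_1+f_2-f_3=7-18+21-10=0$.
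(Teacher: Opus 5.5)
Your proposal is correct and follows the paper's proof. Part (1) is a finite exact-arithmetic verification from the ten level-one facet normals, and parts (2)--(3) follow exactly as in Theorem~\ref{thm:A}, via Proposition~\ref{prop:planting}, Remark~\ref{rem:delpezzo}, Theorem~\ref{thm:trichotomy}(R), Lemma~\ref{lem:localglobal} and Batyrev's formulas (the paper records $|\Delta_C\cap\ZZ^4|=9$ and $|\Delta_C^\circ\cap\ZZ^4|=141$); your pencil argument with $u_t=(0,0,1,t)$, $t\in[0,1]$, is a clean conceptual confirmation of $\ell(F_C^\circ)=1$ and matches the paper's remark that the third vertex realizes $t_{\min}=0$.
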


\begin{proof}
The ten facet normals are
\[
\begin{array}{lllll}
(0,0,1,1), & (0,0,1,0), & (-2,0,-3,0), & (-2,0,-3,1), & (-2,4,1,-4),\\[2pt]
(-2,4,1,1), & (0,-2,-3,0), & (0,-2,-3,1), & (4,-2,1,-4), & (4,-2,1,1),
\end{array}
\]
each at level exactly $1$; the face inventory, edge lengths, and the dual
edge $F_C^\circ$
are finite verifications as in Theorem~\ref{thm:A}(1).  Parts (2) and (3)
follow exactly as in Theorem~\ref{thm:A}, with
$|\Delta_C\cap\ZZ^4| = 9$, $|\Delta_C^\circ\cap\ZZ^4| = 141$ in Batyrev's
formulas.
\end{proof}

\begin{remark}
The same pair of completing vertices applied to the $\frac13(1,1,1)$
triangle (embedded with vertices $(0,0),(-2,-1),(-1,-2)$) gives a $6$-vertex
reflexive polytope whose generic anticanonical hypersurface is a Calabi--Yau
threefold with a \emph{single} $\frac13(1,1,1)$ point, with resolution Hodge
numbers $(3,111)$ and $\chi = -216$; the classical
$X_9 \subset \PP(1,1,1,3,3)$ has three such points.
\end{remark}

\subsection{A single locally deformable non-smoothable point}

We now construct a threefold whose unique singularity is the deformable
non-smoothable pentagon cone.  Our completion searches
realize $\ell(F^\circ)=1$ for type-\textup{(D)} faces only on polytopes
carrying additional singular faces; the example below was instead found in
the Kreuzer--Skarke classification itself, by the scan described in
\S\ref{sec:questions}.

\begin{theorem}\label{thm:D}
Let
\begin{align*}
\Delta_D=\conv\{&(1,0,0,0),\,(0,1,0,0),\,(0,-1,0,0),\,(0,0,1,0),\,
(-3,1,-1,0),\\
&(0,1,1,0),\,(0,0,0,1),\,(-3,2,1,-1),\,(-2,2,2,-1),\,(1,0,1,1)\}
\subset\ZZ^4 .
\end{align*}
Then:
\begin{enumerate}
\item $\Delta_D$ is reflexive with $10$ vertices and $15$ facets; all $30$
edges have lattice length one; its $35$ two-dimensional faces are $34$
standard triangles and one pentagon $F_D$, which is
$GL_2(\ZZ)$-equivalent to the type-\textup{(D)} pentagon $Q_B = T + s$ of
Theorem~\textup{\ref{thm:B}}; and the dual edge $F_D^\circ$, spanned by the
negatives of the facet functionals $(0,1,0,1)$ and $(-1,-1,1,1)$, has
lattice length $1$.
\item The generic anticanonical hypersurface $X_D \subset \PP_{\Delta_D}$ is
a Calabi--Yau threefold whose singular locus is a \emph{single} point, with
germ $C(Q_B)$: the reduced miniversal base of the singularity is a smooth
curve, so the singular point of $X_D$ admits a non-trivial one-parameter
local deformation, and yet $X_D$ admits no smoothing.
\item The maximal projective crepant partial resolution $\widehat X_D$ has
$(h^{1,1},h^{2,1}) = (8,118)$, hence Euler number $-220$.
\end{enumerate}
\end{theorem}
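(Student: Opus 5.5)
The proof will follow the pattern of Theorems~\ref{thm:A} and~\ref{thm:C}: first a finite verification of the polytope data, then an appeal to Proposition~\ref{prop:planting}, Theorem~\ref{thm:trichotomy}, Lemma~\ref{lem:cascade} and Lemma~\ref{lem:localglobal}.

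For (1), I will list the $15$ facet functionals $u$ of $\Delta_D$, each normalized at level one. The facet-enumeration program produces them, and each claim can be checked directly in exact arithmetic. First, every vertex satisfies $\langle u,\cdot\rangle\le1$, and each $u$ attains $1$ on at least four affinely independent vertices; this is reflexivity, since all $u$ are integral. Second, the faces are obtained as intersections of vertex sets. There are $30$ edges, and for each I check that the difference of its endpoints is primitive. There are $35$ two-dimensional faces. For each triangle $\{a,b,c\}$ I check that $b-a,\,c-a$ span a saturated rank-two sublattice of $\ZZ^4$ (equivalently, the $2\times2$ minors of the matrix $[b-a\,|\,c-a]$ have gcd $1$), so the face is standard.

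The pentagon $F_D$ is the set of vertices with $\langle(0,1,0,1),\cdot\rangle=\langle(-1,-1,1,1),\cdot\rangle=1$. I will choose a lattice basis of the saturated plane $\RR F_D\cap\ZZ^4$ and express its five primitive edge vectors in that basis. Then I exhibit an explicit $g\in GL_2(\ZZ)$ carrying them to $E(Q_B)=\{(-2,-1),(-1,-1),(1,-1),(1,1),(1,2)\}$. Finally, $\ell(F_D^\circ)$ is the gcd of the coordinates of $(0,1,0,1)-(-1,-1,1,1)=(1,2,-1,0)$, which is $1$.

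For (2), Proposition~\ref{prop:planting} applies because all edges are unit. Each of the $34$ standard-triangle faces has a smooth cone and contributes nothing. The face $F_D$ contributes exactly $\ell(F_D^\circ)=1$ point, with germ $C(F_D)\cong C(Q_B)$. By part (3) of that proposition, $X_D$ is smooth elsewhere, so $\Sing X_D$ is a single point. Example~\ref{ex:pentagon} and Theorem~\ref{thm:trichotomy} show that the reduced miniversal base is a smooth curve with no smoothing component. Lemma~\ref{lem:cascade}(2) shows that the one-parameter deformation is non-trivial, its general fibre carrying a $\frac13(1,1,1)$ point. Lemma~\ref{lem:localglobal} (equivalently, Corollary~\ref{cor:global}) then rules out any smoothing of $X_D$.

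For (3), I will count $|\Delta_D\cap\ZZ^4|$ and $|\Delta_D^\circ\cap\ZZ^4|$, together with the interior points of faces and of dual faces. Batyrev's formulas then give $h^{1,1}=8$ and $h^{2,1}=118$, so $\chi=2(8-118)=-220$. As a cross-check, the Kreuzer--Skarke data records these Hodge numbers for this polytope.

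The main obstacle is the bookkeeping in (1): certifying that the face lattice is complete, so that no singular two-dimensional face is overlooked. I would guard against this by checking the incidence structure. Every edge should lie in at least three facets, and every two-dimensional face in exactly two facets. The Euler relation $f_0-f_1+f_2-f_3=10-30+35-15=0$ must hold. The explicit $GL_2(\ZZ)$-equivalence of the pentagon with $Q_B$ is the second point needing care, since the type-(D) conclusion depends entirely on it.
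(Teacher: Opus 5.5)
Your proposal is correct and follows essentially the same route as the paper. The paper also certifies (1) by exact finite verification: it lists the fifteen level-one facet normals, computes the pentagon's edge multiset $\{(1,0),(3,2),(-1,0),(-3,-1),(0,-1)\}$ in the induced lattice, and exhibits the unimodular map $\left(\begin{smallmatrix}-1&2\\-1&1\end{smallmatrix}\right)$ onto $E(Q_B)$. It then deduces (2) from Proposition~\ref{prop:planting}, Example~\ref{ex:pentagon}, Theorem~\ref{thm:trichotomy} and Lemma~\ref{lem:localglobal}, and (3) from Batyrev's formulas. Your incidence and Euler-relation checks for completeness of the facet list, and your appeal to Lemma~\ref{lem:cascade}(2) for non-triviality, are harmless additions to that argument.
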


\begin{proof}
All statements in (1) are finite verifications in exact arithmetic as in
Theorem~\ref{thm:A}(1): the fifteen facet normals are
\[
\begin{array}{lllll}
(0,1,0,1), & (-1,-1,1,1), & (1,1,0,0), & (1,1,0,-1), & (1,0,1,-1),\\[2pt]
(1,1,-1,1), & (1,1,-1,-3), & (1,1,-3,1), & (1,1,-3,-5), & (1,-1,1,-1),\\[2pt]
(1,-1,1,-3), & (1,-1,-1,1), & (1,-1,-1,-7), & (1,-1,-5,1), & (1,-1,-5,-11),
\end{array}
\]
each at level exactly $1$; the pentagon face $F_D$ has vertices
$5,7,8,9,10$ in the order listed, its edge multiset in the induced lattice
is $\{(1,0),(3,2),(-1,0),(-3,-1),(0,-1)\}$, and the unimodular map
$\left(\begin{smallmatrix}-1&2\\-1&1\end{smallmatrix}\right)$
carries it to $E(Q_B)$; the first two facet normals listed are those of the
two facets through $F_D$, and the dual edge they determine has lattice
length $1$.  Part (2) follows from
Proposition~\ref{prop:planting}, Example~\ref{ex:pentagon},
Theorem~\ref{thm:trichotomy} and Lemma~\ref{lem:localglobal} exactly as in
Theorem~\ref{thm:B}; part (3) is Batyrev's formulas with
$|\Delta_D\cap\ZZ^4| = 15$, $|\Delta_D^\circ\cap\ZZ^4| = 152$.
\end{proof}

\begin{remark}
Theorems~\ref{thm:C} and~\ref{thm:D} together answer the minimality
question for both non-smoothable types: one reduced-rigid point, or one deformable
point, on a compact Calabi--Yau threefold, each realized inside the
Kreuzer--Skarke classification.  The scan that produced $\Delta_D$ found
three $10$-vertex polytopes whose unique \emph{non-smoothable} face is of
type \textup{(D)} with dual edge of length one; $\Delta_D$ is the one whose
other faces are all smooth (the other two carry additional ordinary
smoothable singular points).  Subject to the database hypothesis stated in
\S\ref{sec:questions}, over the full classification there are exactly four
polytopes whose generic hypersurface has a \emph{single}
singular point of type \textup{(D)}, with $10$, $11$, $11$ and $12$ vertices
($\Delta_D$ being the minimal one), and in all four the germ is $C(Q_B)$;
see \S\ref{sec:questions}.
\end{remark}

\section{Occurrence in the Kreuzer--Skarke classification and further questions}\label{sec:questions}

\subsection{Completion searches and the database enumeration}
The completion search that produced $\Delta_A$ and $\Delta_B$ is the simplest
possible: complete the embedded polygon by two vertices, one of which can be
normalized to $e_4$.  This family realizes $18$ of the $87$ non-smoothable
census classes of Proposition~\ref{prop:census} as two-dimensional faces of
reflexive $4$-polytopes (both examples above use the same completing vertex
$(1,1,-1,-1)$), and the count is stable: enlarging the coordinate box for
the completing vertex from $[-3,3]$ to $[-5,5]$ produces no new classes, and
every class realized this way has at most $6$ interior lattice points.
Completing by \emph{three} vertices instead (as in
Theorem~\ref{thm:C}, with both free vertices ranging over the box
$[-2,2]^4$) raises the count to $32$ of the $87$: all $23$ classes with
$i \le 6$ embed, together with $9$ classes with $7 \le i \le 10$.

A complementary approach is to scan a
per-vertex-count copy of the Kreuzer--Skarke classification.  We carried out
the test of Theorem~\ref{thm:trichotomy} on that copy and on the remaining
$36$-vertex polytope separately \cite{KreuzerSkarke00}.  The resulting global
counts have the explicit database-completeness hypothesis in the next
statement; the checks supporting it are described at the end of the
subsection.

\begin{proposition}[Frequency of the local obstruction]\label{prop:scan}
Assume that the per-vertex-count database files described below contain,
without repetition, exactly the reflexive $4$-polytopes in the vertex range
specified there.  Then, of the $473{,}800{,}776$ reflexive $4$-polytopes,
exactly $39{,}175{,}536$
(that is, $8.27\%$) carry a two-dimensional face with primitive edges whose
cone is not smoothable.  For each of these the generic anticanonical
hypersurface is a Calabi--Yau threefold admitting no smoothing, by
Corollary~\textup{\ref{cor:global}} \textup{(}via
Remark~\textup{\ref{rem:longedges}} for those carrying longer edges
elsewhere\textup{)}.
\end{proposition}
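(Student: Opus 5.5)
The statement has two parts: a computational count over the Kreuzer--Skarke data, conditional on the database hypothesis, and a theoretical consequence that follows from results already in the paper. I would handle them in that order.

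\emph{The count.} For each polytope $\Delta$ in the per-vertex-count files (and the separate $36$-vertex polytope) I will run the following steps.
\begin{enumerate}
\item Compute the facet normals at level one, using the normalization of \S\ref{sec:planting}, and from them the face lattice.
\item For every two-dimensional face $F$, determined as the intersection of the facets containing a given set of vertices, compute the lattice induced on the affine span of $F$. I take a lattice basis of $(\operatorname{aff}F - v_0)\cap\ZZ^4$ via Hermite/Smith normal form and express the vertices of $F$ in it.
\item Test whether every edge of $F$ is primitive in that lattice. If so, form the edge multiset $E(F)$.
\item Apply the finite test of Theorem~\ref{thm:trichotomy} through Lemma~\ref{lem:partition}:
\begin{itemize}
\item first look for a proper zero-sum sub-multiset, which separates type (R) from the rest;
\item then search recursively for a partition of $E(F)$ into antipodal pairs and unimodular triples, which separates (D) from (S).
\end{itemize}
\item Mark $\Delta$ if some unit-edge face is of type (R) or (D).
\end{enumerate}
Summing the marks over the files, under the hypothesis that they list each reflexive $4$-polytope exactly once, gives the number $39{,}175{,}536$ out of the total $473{,}800{,}776$.

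Two cheap prefilters keep the scan feasible:
\begin{itemize}
\item a face that is a standard triangle (area $1/2$ with unit edges) is skipped;
\item the trichotomy verdict is cached by a $GL_2(\ZZ)$-normal form of $E(F)$, since Proposition~\ref{prop:census} shows the number of distinct classes is small in practice.
\end{itemize}

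\emph{Validation.} To guard against errors, I will rerun the census classes of Proposition~\ref{prop:census} through the same routine. I will also check that the Hodge numbers computed by Batyrev's formulas agree with those stored in the database, which confirms that the faces and lattices are being read correctly.

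\emph{The consequence.} For every marked $\Delta$, the marking face $F$ has unit edges and is of type (R) or (D). Corollary~\ref{cor:global} then applies:
\begin{itemize}
\item directly, when all edges of $\Delta$ have lattice length one;
\item through Remark~\ref{rem:longedges} otherwise, since Proposition~\ref{prop:planting}(2) at the unit-edge face $F$ still produces $\ell(F^\circ)\ge1$ isolated points with germ $C(F)$.
\end{itemize}
Lemma~\ref{lem:localglobal} then excludes smoothings. No further argument is needed for this part beyond noting that $\ell(F^\circ)\ge1$ for any edge of $\Delta^\circ$.

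\emph{Main obstacle.} I expect the main difficulty to be practical and lattice-theoretic rather than conceptual. The induced lattice on $\operatorname{aff}F$ can be strictly finer than the one suggested by coordinate differences, so edge primitivity and unimodularity of triples must be computed in that saturated lattice. Getting this wrong would misclassify faces. Beyond that, the count only certifies a statement about the files, which is exactly why the database hypothesis is stated explicitly.
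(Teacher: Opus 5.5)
Your proposal is correct and takes essentially the paper's route: the count is the finite edge-multiset test of Theorem~\ref{thm:trichotomy} (via Lemma~\ref{lem:partition}), run on every two-face of every entry of the database copy and on the separately handled $36$-vertex polytope, and it is valid only under the completeness and no-duplication hypothesis; the non-smoothability conclusion is Corollary~\ref{cor:global} together with Remark~\ref{rem:longedges}. You simply make explicit implementation details that the paper leaves to its ancillary code (the saturated lattice on $\operatorname{aff}F$, normal-form caching, and Hodge-number validation against the database). One small correction: the caching is sound however many classes occur, but Proposition~\ref{prop:census} does not show that this number is small, since non-smoothable faces outside its range do appear.
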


\begin{proof}
By Theorem~\ref{thm:trichotomy} the type of $C(F)$ depends only on the edge
multiset of $F$, so the test is a finite computation on each polytope; it was
run on every entry of the database copy and on the separately reconstructed
$36$-vertex polytope.  Under the stated completeness and no-duplication
hypothesis, these are precisely the members of the classification, and the
surviving counts are recorded below.
\end{proof}

Under the same database hypothesis, in more detail we find:
\begin{itemize}
\item $8.27\%$ (that is, $39{,}175{,}536$ of them) carry a unit-edge
non-smoothable two-dimensional face, so their generic anticanonical
hypersurfaces are non-smoothable Calabi--Yau threefolds by
Corollary~\ref{cor:global} (via Remark~\ref{rem:longedges} for the
polytopes carrying longer edges elsewhere); the fraction rises from $11.7\%$ at $5$
vertices to a peak of $14.0\%$ at $9$ and then declines steadily with the
vertex count, to below $1\%$ beyond $22$ vertices;
\item faces of type \textup{(D)} appear from $7$ vertices on (as they
must: a unit-edge triangle is indecomposable and a decomposable unit-edge
quadrilateral splits into two segment pairs and is smoothable, so a
type-\textup{(D)} polygon has at least five edges, and a
pentagonal two-face together with the two further vertices needed to span
dimension four already requires seven vertices), $497{,}434$ face
occurrences in all;
\item combining with the completion searches above, exactly $65$ of the
$87$ census classes are realized as two-dimensional faces of reflexive
$4$-polytopes.  The $22$ classes that never occur are all of type
\textup{(D)} and all have $i \ge 11$ interior points, so the occurrence of
a class decays with its size and, beyond a threshold, ceases entirely,
an obstruction we do not explain;
\item unit-edge non-smoothable faces \emph{outside} the range of
Proposition~\ref{prop:census} also
occur (reduced-rigid polygons with an edge coordinate exceeding $2$; those we
inspected have at most five edges), so the classes of
Proposition~\ref{prop:census} do not exhaust the phenomenon.
\end{itemize}
Under that hypothesis, the scan determines the fraction of reflexive
$4$-polytopes carrying a non-smoothable unit-edge two-face.  It therefore
gives a lower bound for the proportion whose generic anticanonical
hypersurface is non-smoothable, rather than a complete count of globally
non-smoothable hypersurfaces.  The counts concern polytopes up to lattice
equivalence, not deformation-equivalence classes of threefolds.
The scan also determines which local models occur within
Proposition~\ref{prop:census}; the unit-edge non-smoothable faces outside
that range remain to be classified.  Under the same hypothesis, the
single-point examples are similarly constrained: among
all reflexive $4$-polytopes with unit edges whose generic hypersurface
has a \emph{single} singular point, and that point of type \textup{(D)},
the point is the pentagon cone $C(Q_B)$ of Theorem~\ref{thm:D} in
\emph{every} case (the four such polytopes on the list all give
$C(Q_B)$); no other type-\textup{(D)} germ occurs as the sole singularity
of a Batyrev threefold.  (Type-\textup{(D)} germs of other classes do
occur as the
unique \emph{non-smoothable} point of a hypersurface that also carries
smoothable singular points; it is only as a lone singularity that
$C(Q_B)$ is forced.)

\emph{Completeness of the enumeration.}  We scanned $473{,}800{,}775$
polytopes through the per-vertex-count copy of the Kreuzer--Skarke
database that we used, whose files cover $5$ to $33$ vertices, and the
one remaining polytope separately.  Granting that the copy's entries are
distinct members of the classification (the polytope counts and Hodge
data recorded for each vertex number were validated throughout the
scan), the one polytope it omits is the classification's unique member
with more than $33$ vertices, and we identify it: it is the $36$-vertex
product of two reflexive hexagons, verified
directly.\footnote{Ancillary file \texttt{missing\_polytope.py}.}  That
polytope's $48$ two-dimensional faces are $36$ unit parallelograms and
$12$ hexagons of $\mathrm{dP}_6$ type, all of type \textup{(S)} with dual
edge of length one, so it contributes to none of the counts above.
For reproducibility, the ancillary input manifest pins the database copy to
repository revision
\texttt{60c0e119a036} (with the full identifier in the manifest) and records
the byte size and SHA-256 digest of every per-vertex-count file.  This
identifies the finite input exactly; it does not replace the completeness
and no-duplication hypothesis in Proposition~\ref{prop:scan}.

\subsection{Non-isolated singularities}

\begin{question}
The faces with non-unit edges, whose germs are non-isolated and lie beyond
Theorem~\ref{thm:trichotomy}, deserve their own count.  Filip's
$0$-mutability criterion \cite{Filip25} detects when his distinguished formal
families have smooth general fibre, but a classification of all smoothing
components in this setting remains open.
\end{question}

\subsection{The smoothable faces and the local-to-global problem}
Corollary~\ref{cor:global} uses only the non-smoothable local models.  In the
opposite regime, where all two-dimensional faces are of type \textup{(S)},
local
smoothings exist at every singular point, and the question becomes whether
they can be realized simultaneously by a global deformation; for ordinary
double points this is Friedman's criterion \cite{Friedman86}, and for
Calabi--Yau threefolds with canonical singularities the local-to-global
obstruction theory of \cite{Gross97,Namikawa02} applies.  A combinatorial
criterion on $\Delta$ for global smoothability of the Batyrev hypersurface,
in the spirit of the Fano-threefold results of \cite{Petracci20,CHP24}, is
the natural continuation of this note; it is the subject of the companion
paper \cite{paper2}, which smooths the points over a type-\textup{(S)} face
whenever its dual edge has lattice length at least two, shows the threshold
sharp, and compares the resulting criterion with the census of
Batyrev--Kreuzer \cite{BK10} of the all-conifold regime.
For nodes and anticanonical cones over smooth del Pezzo surfaces of
degrees five, six and seven, or over $\PP^1\times\PP^1$, the later
paper~\cite{paper5} gives a necessary and sufficient local-to-global
smoothing criterion for these singularity types.

\subsection{Mirror symmetry}
Corti--Filip--Petracci conjecture that the smoothing components of
$\Def C(Q)$ correspond to the $0$-mutable Laurent polynomials with Newton
polygon $Q$ \cite{CFP}, a mirror-theoretic statement.  What the presence of
a type \textup{(R)} or \textup{(D)} face of $\Delta$, and hence the
non-smoothability of $X$, means for the mirror family of the Batyrev pair
$(\Delta, \Delta^\circ)$ is taken up in \cite{paper3}, which classifies the
pairs whose \emph{both} hypersurfaces have isolated singularities and finds
that a type-\textup{(D)} face never occurs among them.

\bigskip
\noindent{\sc Bernd Johannes Wuebben, New York, NY,} \texttt{wuebben@gmail.com}

\end{document}